%
%

\documentclass[12pt]{amsart}
\usepackage{latexsym,amssymb,amscd}

%
%
\def\NZQ{\mathbb}               

\def\ZZ{{\NZQ Z}}
\def\RR{{\NZQ R}}

%
%

%

\def\B'c{{\mathcal{B'}}}
\def\U'c{{\mathcal{U'}}}

%

%
\def\opn#1#2{\def#1{\operatorname{#2}}} 
%
\opn\chara{char}
\opn\length{\ell}
\opn\projdim{proj\,dim}
\opn\injdim{inj\,dim}
\opn\ini{in}
\opn\rank{rank}
\opn\depth{depth}
\opn\sdepth{sdepth}
\opn\height{ht}
\opn\embdim{emb\,dim}
\opn\codim{codim}

\opn\Tr{Tr}
\opn\bigrank{big\,rank}
\opn\superheight{superheight}\opn\lcm{lcm}
\opn\trdeg{tr\,deg}%
\opn\reg{reg}
\opn\lreg{lreg}
\opn\set{set}
\opn\supp{Supp}
\opn\shad{Shad}
%
\opn\div{div}
\opn\Div{Div}
\opn\cl{cl}
\opn\Cl{Cl}
%
%
\opn\Spec{Spec}
\opn\Supp{Supp}
\opn\supp{supp}
\opn\Sing{Sing}
\opn\Ass{Ass}
%
%
\opn\Ann{Ann}
\opn\Rad{Rad}
\opn\Soc{Soc}
%
%
\opn\Ker{Ker}
\opn\Coker{Coker}
\opn\Im{Im}
\opn\Hom{Hom}
\opn\Tor{Tor}
\opn\Ext{Ext}
\opn\End{End}
\opn\Aut{Aut}
\opn\id{id}

\opn\nat{nat}
\opn\GL{GL}
\opn\SL{SL}
\opn\mod{mod}
\opn\ord{ord}
%
%
\opn\aff{aff}
\opn\con{conv}
\opn\relint{relint}
\opn\st{st}
\opn\lk{lk}
\opn\cn{cn}
\opn\core{core}
\opn\vol{vol}
\opn\gr{gr}

%
%

\def\pot#1#2{#1[\kern-0.28ex[#2]\kern-0.28ex]}

%
%
\opn\dirlim{\underrightarrow{\lim}}
\opn\invlim{\underleftarrow{\lim}}
%
%
%

\def\pnt{{\raise0.5mm\hbox{\large\bf.}}}

%
%

\def\Implies{\ifmmode\Longrightarrow \else
     \unskip${}\Longrightarrow{}$\ignorespaces\fi}
\def\implies{\ifmmode\Rightarrow \else
     \unskip${}\Rightarrow{}$\ignorespaces\fi}
\def\iff{\ifmmode\Longleftrightarrow \else
     \unskip${}\Longleftrightarrow{}$\ignorespaces\fi}

\let\:=\colon
\newtheorem{Theorem}{Theorem}[section]
\newtheorem{Lemma}[Theorem]{Lemma}
\newtheorem{Corollary}[Theorem]{Corollary}
\newtheorem{Proposition}[Theorem]{Proposition}

\newtheorem{Example}[Theorem]{Example}

\newcommand{\beq}[1]{\begin{equation}\label{#1}}
\newcommand{\eeq}{\end{equation}}

\newtheorem{theorem}{Theorem}[section]
\newcommand{\bet}[1]{\begin{theorem}\label{#1}}
\newcommand{\eet}{\end{theorem}}
\newtheorem{lemma}[theorem]{Lemma}
\newcommand{\bel}[1]{\begin{lemma}\label{#1}}
\newcommand{\eel}{\end{lemma}}
\newcommand{\bep}{\begin{proof}}
\newcommand{\eep}{\end{proof}}
%
%
\let\epsilon=\varepsilon
\let\phi=\varphi
\let\kappa=\varkappa
%
%
\textwidth=15cm
\textheight=22cm
\topmargin=0.5cm
\oddsidemargin=0.5cm
\evensidemargin=0.5cm
\pagestyle{plain}
\footskip=40 pt
%
%

\numberwithin{equation}{section}

\title{On the behaviour of Stanley depth under variable adjunction}

\author[Mihai Cipu]{Mihai Cipu}
\address{Institute of Mathematics of the Romanian Academy,
P.O. Box 1-764, RO-014700 Bucharest, Romania}
\email{Mihai.Cipu@imar.ro}

\author[Muhammad Imran Qureshi]{Muhammad Imran Qureshi}
\address{Abdus Salam School of Mathematical Sciences, GC
University, Lahore, 68-B New Muslim town Lahore, Pakistan}
\email{imranqureshi18@gmail.com}
\thanks{Both authors are grateful to Professor D. Popescu
for  helpful discussions
during the preparation of this paper.}

\begin{document}

\maketitle

\begin{abstract}
Let $S=K[x_1,\ldots,x_n]$ be a polynomial ring in $n$ variables
over the field $K$. For integers $1\leq t< n$ consider the ideal $I=(x_1,\ldots,x_t)\cap(x_{t+1}, \ldots,x_n)$ in $S$. In this
paper we  bound from above the Stanley depth of the ideal $I'=(I,x_{n+1},\ldots,x_{n+p})\subset S'=S[x_{n+1},\ldots,x_{n+p}]$.
We give similar upper bounds for the Stanley depth of the ideal $(I_{n,2},x_{n+1},\ldots,x_{n+p})$, where $I_{n,2}$ is the
squarefree Veronese ideal of degree 2 in $n$ variables.
\end{abstract}

\section{Introduction}
Let $S=K[x_1,\ldots,x_n]$ be the polynomial ring  in $n$ variables
over a field  $K$ and $M$ be a finitely generated $\ZZ^n$-graded
$S$-module. If $u\in M$ is a homogeneous element in $M$ and
$Z\subset \{x_1,\ldots,x_n\}$ then let $uK[Z]\subset M$ denote
the linear $K$-subspace of all elements of the form $uf$,
$f\in K[Z]$. This space is called a Stanley space of dimension
$|Z|$ if $uK[Z]$ is a free $K[Z]$-module. A Stanley decomposition
of module $M$ is a presentation of the $K$-vector space $M$ as
a finite direct sum of Stanley spaces
$\mathcal{D}:M=\bigoplus^r _{i=1} u_i K[Z_i]$. Set
$\sdepth(\mathcal{D})=\min\{|Z_i|: i=1,\ldots,r\}$. The number
\[\sdepth(M):=\max\{\sdepth(\mathcal{D}):\mathcal{D} \text{ is a
 Stanley decomposition of}\ M\}
\]
is called the Stanley depth of $M$. Stanley depth is an invariant
which has some common properties with the homological depth
invariant.

In 1982 Stanley conjectured (see \cite{RPS}) that
$\sdepth M\geq\depth M$.
This conjecture is still open except some results obtained mainly
for $n\leq 5$ (see~\cite{A}, \cite{AP}, \cite{DP}, \cite{PQ},
\cite{R}). A method to compute the Stanley depth is given in
\cite{HVZ}. Even when it does not provide the value of the
Stanley depth, this method allows one to obtain fairly good
estimations for the invariant of interest.

In~\cite{I}, Ishaq proved that if $J$ is a monomial ideal of
$S =K[x_1,\ldots,x_n]$ and $J'=(J,x_{n+1})$ is the ideal of
$S'=S[x_{n+1}]$ then $\sdepth(J)\leq \sdepth(J')\leq
\sdepth(J)+1$. When adjoining more variables, a similar
result can be easily obtained by iterating Ishak's result.
However, the upper bound for $\sdepth((J,x_{n+1}, \ldots,
x_{n+p}))$ ($p\ge 2$) thus obtained can
be sometimes too pessimistic.

The aim of this paper is to bound from above the Stanley depth of
ideals obtained by adjoining variables to monomial  ideals in $S$
belonging to two classes. A first class consists of radical
monomial ideals described in the theorem below, whose proof is
given in the next section.

\medskip

\textbf{Theorem~\ref{te1}.} \textit{
Let $I=(x_1,\ldots,x_t)\bigcap(x_{t+1},\ldots,x_n)$ be a
monomial ideal in \linebreak
$S =K[x_1,\ldots,x_n]$, where $1\leq t< n$,
and let $I'=(I,x_{n+1},\ldots,x_{n+p}) \subset S'=S[x_{n+1},
\ldots, x_{n+p}]$, where $p\geq 2$.
 Then}
\beq{ect1}
\sdepth(I')\leq 2+\frac{\left(\!\!
                          \begin{array}{c}
                            n \\
                            3 \\
                          \end{array}
                        \!\!\right)
                        -
                        \left(\!\!
                          \begin{array}{c}
                            t \\
                            3 \\
                          \end{array}
                        \!\!\right)
                        -
                        \left(\!\!
                          \begin{array}{c}
                            n-t \\
                            3 \\
                          \end{array}
                        \!\!\right)
                        +p\left(\!\!
                           \begin{array}{c}
                             n \\
                             2 \\
                           \end{array}
                         \!\!\right)+n\left(\!\!
                                   \begin{array}{c}
                                     p \\
                                     2 \\
                                   \end{array}
                                \!\! \right)+\left(\!\!
                                           \begin{array}{c}
                                             p \\
                                             3 \\
                                           \end{array}
                                        \!\! \right)
}{t(n-t)+np-\frac{p(n+2)}{4}
} .
\eeq
\medskip

An alternative bound is obtained by imposing some conditions on
$t$, $n$ and $p$, see Theorem ~\ref{te2} in Section~\ref{sec2}
for the precise statement.

The reasoning used to prove the results mentioned above can be
adapted to work for another class of ideals, namely, squarefree
Veronese ideals of degree 2. In Section~\ref{sec3} we shall prove
the following.

\medskip

\textbf{Theorem~\ref{te3}.} \textit{
Let $I_{n,2}$ be the squarefree Veronese ideal of degree 2
in $S$ and $(I_{n,2},x_{n+1},\ldots,x_{n+p})$ be the squarefree
ideal in $S'$, where $p\ge 2$. Then}
\[
\sdepth(I_{n,2},x_{n+1},\ldots,x_{n+p})\leq 2+\frac{\left(\!\!
                            \begin{array}{c}
                              n+p \\
                              3 \\
                            \end{array}
                          \!\!\right)
}{\left(\!\!
    \begin{array}{c}
      n \\
      2 \\
    \end{array}
 \!\! \right)+np-p- \frac{p}{2} \lfloor(
                                     \begin{array}{c}
                                       n \\
                                       3 \\
                                     \end{array}
                                   )/(
                                     \begin{array}{c}
                                       n \\
                                       2 \\
                                     \end{array}
                                   )
  \rfloor
}.
\]

\medskip

Also this bound is further improved by
imposing some condition on $n$ and $p$ (cf. Theorem~\ref{te4}).

Herzog, Vl\u adoiu and Zhang~\cite{HVZ} have results implying
that Stanley's conjecture is true for squarefree Veronese
ideals. In Section~\ref{sec3} we note that Stanley's conjecture
is valid for the ideal obtained by adding several variables
to a squarefree Veronese ideal.

\medskip

\textbf{Proposition~\ref{pr1}.} \textit{
Let $I\subset S=K[x_1,\ldots,x_n]$ be the squarefree
Veronese ideal generated by all monomials of degree $d$ and
$I'=(I,x_{n+1},\ldots,x_{n+m})\subset S'=
S[x_{n+1},\ldots,x_{n+m}]$. Then
Stanley's conjecture holds for the  ideal $I'$.
}

\medskip

In the last section of the paper we compare the bounds which
we obtained without conditions with those which we obtained
when appropriate conditions are imposed.

Some results from~\cite{MC}, \cite{S}, \cite{I}
and~\cite{S1} are very important for our estimations
of Stanley depth and precise references will be given in
appropriate places.  For unexplained notation, the reader
is referred  to~\cite{HVZ}.

\section{Upper bounds for the Stanley depth of squarefree
monomial ideal when some variables are added}  \label{sec2}

\begin{Theorem} \label{te1}
Let $I=(x_1,\ldots,x_t)\bigcap(x_{t+1},\ldots,x_n)$ be a
monomial ideal in \linebreak
$S =K[x_1,\ldots,x_n]$, where $1\leq t< n$,
and let $I'=(I,x_{n+1},\ldots,x_{n+p}) \subset S'=S[x_{n+1},
\ldots, x_{n+p}]$, where $p\geq 2$.
Then
\[\sdepth(I')\leq 2+\frac{\left(
                          \begin{array}{c}
                            n \\
                            3 \\
                          \end{array}
                        \right)
                        -
                        \left(
                          \begin{array}{c}
                            t \\
                            3 \\
                          \end{array}
                        \right)
                        -
                        \left(
                          \begin{array}{c}
                            n-t \\
                            3 \\
                          \end{array}
                        \right)
                        +p\left(
                           \begin{array}{c}
                             n \\
                             2 \\
                           \end{array}
                         \right)+n\left(
                                   \begin{array}{c}
                                     p \\
                                     2 \\
                                   \end{array}
                                 \right)+\left(
                                           \begin{array}{c}
                                             p \\
                                             3 \\
                                           \end{array}
                                         \right)
}{t(n-t)+np-\frac{p(n+2)}{4}
} .
\]
\end{Theorem}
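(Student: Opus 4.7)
The plan is to analyze any partition of the characteristic poset $P = P_{I'}^{(1,\ldots,1)}$ of squarefree monomials in $I'$, and derive an upper bound on $s := \sdepth(I')$ by counting how efficiently intervals of the partition cover elements at levels $2$ and $3$.

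First, since $I$ is exactly the ideal generated by the $t(n-t)$ quadratics $x_i x_j$ with $1 \le i \le t < j \le n$, the ideal $I'$ is minimally generated by these together with the $p$ variables $x_{n+1}, \ldots, x_{n+p}$. One readily enumerates the squarefree monomials of $I'$ at each level: exactly $p$ at level~$1$ (the new variables); $N_2 = t(n-t) + np + \binom{p}{2}$ at level~$2$, distinguished by whether the monomial involves $0$, $1$, or $2$ of the new variables; and at level~$3$ one obtains precisely
\[
N_3 \;=\; \binom{n}{3} - \binom{t}{3} - \binom{n-t}{3} + p\binom{n}{2} + n\binom{p}{2} + \binom{p}{3},
\]
which matches the numerator of the stated bound.

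Next I would fix an optimal Stanley partition $P = \bigsqcup_i [u_i, v_i]$ with $|v_i| \ge s$ for every $i$, and stratify the intervals by $a_i := |u_i|$. Since the only degree-$1$ generators of $I'$ are the new variables, there are exactly $p$ intervals with $a_i = 1$; call this set $I_1$, and define $I_2$, $I_{\ge 3}$ analogously. A generic interval contributes $\binom{b_i - a_i}{k - a_i}$ elements at level $k$, where $b_i := |v_i|$, so
\[
N_2 \;=\; |I_2| + \sum_{i \in I_1}(b_i - 1), \qquad N_3 \;\ge\; \sum_{i \in I_1}\binom{b_i-1}{2} + (s-2)\,|I_2|,
\]
the second inequality using $b_i \ge s$ for $i \in I_2$ and discarding the non-negative contribution of $I_{\ge 3}$.

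The main work is to eliminate the unknowns $b_i$ for $i \in I_1$. Two bounds are available: $b_i \ge s$ for every $i \in I_1$, and the sum $\sum_{i \in I_1}(b_i - 1)$ cannot exceed $np + \binom{p}{2}$, since a level-$1$ interval $[x_{n+k}, v_k]$ may only absorb level-$2$ elements containing $x_{n+k}$. Applying Jensen's inequality to the convex map $b \mapsto \binom{b-1}{2}$ and combining the resulting estimates with the counting tools of \cite{MC, S, S1, I} should, after careful arithmetic, produce
\[
(s - 2)\!\left[\, t(n-t) + np - \tfrac{p(n+2)}{4}\right] \le N_3,
\]
which is equivalent to the theorem. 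The main obstacle will be this optimization: intervals with $a_i = 1$ convert level-$2$ elements into level-$3$ elements at only half the efficiency of those with $a_i = 2$ (ratio $(b_i - 2)/2$ versus $b_i - 2$), so balancing how many of the $np + \binom{p}{2}$ ``mixed'' level-$2$ elements are absorbed by $I_1$-type intervals against how many $I_2$-type intervals remain available to contribute efficiently is delicate, and extracting the precise adjustment $\frac{p(n+2)}{4}$ requires handling this balance exactly.
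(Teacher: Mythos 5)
Your set-up is exactly the paper's: count the level-$3$ elements of the characteristic poset covered by the three types of intervals in an optimal partition, with the same values of $N_2$ and $N_3$ and the same stratification by $|u_i|$. However, the proof is not complete as written, and the gap is precisely at the point you flag as "the main obstacle." Two comments. First, the optimization over the $b_i$ that you describe as delicate is in fact immediate: the function $b\mapsto \binom{b-1}{2}-(s-2)(b-1)$ has derivative $b-s+\tfrac12>0$ on the feasible region $b\ge s$, so the lower bound for $N_3$ is minimized by setting $b_i=s$ for every $i\in I_1$ (equivalently, one may assume from the start, as the paper does, that $|D|=k$ whenever $C\ne D$). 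No Jensen-type balancing is needed, and carrying this out yields
\[
(s-2)\Bigl[\,t(n-t)+np+\tbinom{p}{2}-\tfrac{p(s-1)}{2}\Bigr]\le N_3 .
\]
Second, and this is the genuine missing idea: this inequality is cubic in $s$ and does \emph{not} by itself give the stated linear bound. The term $\tfrac{p(n+2)}{4}$ in the denominator does not come from any internal optimization of the partition; it comes from substituting the \emph{a priori} upper bound $s\le \tfrac{n+2}{2}+p$ (Ishaq, \cite[Theorem 2.11]{I} combined with his bound for $\sdepth\bigl((x_1,\dots,x_t)\cap(x_{t+1},\dots,x_n)\bigr)$) into the quantity $\tfrac{p(s-1)}{2}$, which turns the bracket into $t(n-t)+np-\tfrac{p(n+2)}{4}>0$ and lets one divide by it. Your appeal to "the counting tools of \cite{MC,S,S1,I}" gestures at this but never names the needed result, and without it the argument stalls at the cubic inequality. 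Supplying that one citation and the two lines of arithmetic above closes the proof.
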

\begin{proof}
Note that $I'$ is a squarefree monomial ideal generated by
monomials of degree 2 and 1. Let $k=\sdepth(I')$. The poset
$P_{I'}$ has the partition
$\mathcal{P}: P_{I'}=\bigcup_{i=1}^s[C_i,D_i]$, satisfying
$\sdepth(\mathcal{D(\mathcal{P})})=k$, where
$\mathcal{D(\mathcal{P})}$ is the Stanley decomposition of
$I'$ with respect to the partition $\mathcal{P}$. We may
 choose $\mathcal{P}$ such that $|D|=k$ whenever $C\neq D$
in the interval $[C,D]$.

For each interval $[C_i,D_i]$ in  $\mathcal{P}$ with
$|C_i|=2$ when in the corresponding monomial,
one variable belongs to $\{x_1,\ldots,x_t\}$ and one to
$\{x_{t+1},\ldots,x_n\}$ we have $|D_i|-|C_i|$ subsets of
cardinality 3 in this interval. Now for each interval
$[C_j,D_j]$ when $\mid C_j\mid=1$ we have at least
$\left(
 \begin{array}{c}
             k-1 \\
               2 \\
  \end{array}
 \right)
$ subsets of cardinality 3 in this interval. We have $p$
such intervals. So we have
$p\left(
     \begin{array}{c}
          k-1 \\
            2 \\
     \end{array}
\right)
$ subsets of cardinality 3.

 Now we consider those intervals
$[C_l,D_l]$ such that $|C_l|=2$ and the corresponding monomial
is of the form $x_l x_\lambda$, where
$x_l \in \{x_{n+1},\ldots,x_{n+p}\}$. Now either
$x_\lambda\in \{x_1,\ldots,x_n\}$ or
$x_\lambda\in \{x_{n+1},\ldots,x_{n+p}\}$. If
$x_\lambda\in \{x_1,\ldots,x_n\}$ then we have $np$
such intervals and each has at least $k-2$ subsets of
cardinality 3. If $x_\lambda\in\{x_{n+1},\ldots,x_{n+p}\}$
then we have
$\left(
   \begin{array}{c}
            p \\
            2 \\
   \end{array}
\right)
$ such intervals and each has at least $k-2$ subsets of
cardinality 3. Some subsets of cardinality 2 of the form
$C_l$ already appear in the intervals $[C_j,D_j]$ and such
subsets are $p(k-1)$ in number. Since the partition is
disjoint, we subtract this from total number of $C_l$'s,
so that we have at least
\[
\Big[\left(
    \begin{array}{c}
      n \\
      2 \\
    \end{array}
  \right)-\left(
            \begin{array}{c}
              t \\
              2 \\
            \end{array}
          \right)-\left(
                    \begin{array}{c}
                      n-t \\
                      2 \\
                    \end{array}
                  \right)\Big](k-2)+p\left(
                               \begin{array}{c}
                                 k-1 \\
                                 2 \\
                               \end{array}
                             \right)+\Big[np+\left(
                                           \begin{array}{c}
                                             p \\
                                             2 \\
                                           \end{array}
                                         \right)-p(k-1)
                             \Big](k-2)
\]
subsets of cardinality 3. This number is less than or equal
to the total number of subsets of cardinality 3.
So
\begin{equation} \label{ec1}
\begin{split}\Big[\left(
    \begin{array}{c}
      n \\
      2 \\
    \end{array}
  \right)-\left(
            \begin{array}{c}
              t \\
              2 \\
            \end{array}
          \right)-\left(
                    \begin{array}{c}
                      n-t \\
                      2 \\
                    \end{array}
                  \right)\Big](k-2)+
\Big[ np+\left(
                                           \begin{array}{c}
                                             p \\
                                             2 \\
                                           \end{array}
                                         \right)
-\frac{p(k-1)}{2}
                            \Big] (k-2)\\
\leq \left(
                          \begin{array}{c}
                            n \\
                            3 \\
                          \end{array}
                        \right)
                        -
                        \left(
                          \begin{array}{c}
                            t \\
                            3 \\
                          \end{array}
                        \right)
                        -
                        \left(
                          \begin{array}{c}
                            n-t \\
                            3 \\
                          \end{array}
                        \right)
                        +p\left(
                           \begin{array}{c}
                             n \\
                             2 \\
                           \end{array}
                         \right)+n\left(
                                   \begin{array}{c}
                                     p \\
                                     2 \\
                                   \end{array}
                                 \right)+\left(
                                           \begin{array}{c}
                                             p \\
                                             3 \\
                                           \end{array}
                                         \right).
\end{split}
\end{equation}
Now we know by~\cite[Theorem 2.11]{I} that
$k\leq \frac{n+2}{2}+p$. This implies
$-(k-1)\geq -\frac{n+2}{2}-p+1$. Using this in the left side
of inequality~\eqref{ec1}, one gets
\[
\Big[t(n-t)+np-\frac{p(n+2)}{4}
                             \Big](k-2)
\phantom{subsets of cardinality 3. This number is}
\]
\[
\leq \Big[\left(
    \begin{array}{c}
      n \\
      2 \\
    \end{array}
  \right)-\left(
            \begin{array}{c}
              t \\
              2 \\
            \end{array}
          \right)-\left(
                    \begin{array}{c}
                      n-t \\
                      2 \\
                    \end{array}
                  \right)\Big](k-2)
+\Big[ np+\left(
                                           \begin{array}{c}
                                             p \\
                                             2 \\
                                           \end{array}
                                         \right)
-\frac{p(k-1)}{2}
                             \Big] (k-2).
\]
Combining both inequalities we get the required result.
\end{proof}

\begin{Example}
Let us consider $I=(x_1,x_2,x_3)\cap (x_4,x_5,x_6)
\subseteq S=K[x_1,\ldots,x_6]$. By~\cite[Theorem 2.8]{I},
we have $\sdepth(I)\leq 4$. Let $I'=(I,x_7,x_8,x_9)\subseteq
S'=S[x_7,x_8,x_9]$ then by~\cite[Lemma 2.11]{I} we have
$\sdepth(I')\leq 7$. Now by our Theorem~\ref{te1} we have
$\sdepth(I')\leq 5$.
\end{Example}

We can further improve the upper bound if we impose some
additional condition on  $n$, $t$ and $p$.

Formula~\eqref{ec1} in the proof of Theorem~\ref{te1} is
equivalent to
\[
0\le 3 p k^2-3( 2n p+2 n t+ p^2-2 t^2+2 p)k
 \phantom{The last formula on page 1}
\]
\[ \phantom{The last formula}
+6 n p+6 n t+3 n^2 p-6 t^2-3 n t^2+3 n^2 t
+3 p^2+3 n p^2+2 p+p^3.
\]
Consider it as a quadratic polynomial in $k$ of
discriminant
\beq{ecd}
D:=(36 p t+36 t^2)n^2-36( t^2 p- t p^2+2 t^3)n
+12 p^2-36 p^2 t^2-3 p^4+36 t^4.
\eeq
Since this quadratic polynomial in $n$ has the
discriminant
\[
 \Delta:=432 t p^2 (t + p) (3 t^2 + 3 p t - 4 + p^2 )
\]
obviously positive, we have $D\ge 0$ for either
\[
 n\le t-\frac{p}{2}-p\sqrt{1+\frac{p^2-4}{3t(t+p)}}
\]
or
\beq{ecn}
  n\ge t-\frac{p}{2}+p\sqrt{1+\frac{p^2-4}{3t(t+p)}}.
\eeq
The former possibility is excluded by the fact that
$n>t$, so that, assuming the latter inequality, we
conclude that either
\[
k\le n+\frac{p}{2}+\frac{t(n-t)}{p}+1-\frac{\sqrt{D}}{6p}
\]
or
\beq{eck}
k\ge n+\frac{p}{2}+\frac{t(n-t)}{p}+1+\frac{\sqrt{D}}{6p}.
\eeq
The latter bound for $k$ does not hold (see Lemma~\ref{le1}
below). We have thus obtained the following result.

\begin{Theorem} \label{te2}
Keep the notation and hypotheses  of Theorem~\ref{te1}. If
\[
  n\ge t-\frac{p}{2}+p\sqrt{1+\frac{p^2-4}{3t(t+p)}}
\]
then
\beq{ect2}
\sdepth(I')\le n+\frac{p}{2}+\frac{t(n-t)}{p}+1-
\frac{\sqrt{D}}{6p},
\eeq
where
\[
D=(36 p t+36 t^2)n^2-36( t^2 p- t p^2+2 t^3)n
+12 p^2-36 p^2 t^2-3 p^4+36 t^4.
\]
\end{Theorem}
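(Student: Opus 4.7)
The plan is to revisit the quadratic inequality~\eqref{ec1} obtained in the proof of Theorem~\ref{te1}, but instead of feeding into it the crude bound $k\le(n+2)/2+p$ of~\cite[Theorem~2.11]{I} as was done there, I would treat~\eqref{ec1} as a genuine quadratic inequality in $k:=\sdepth(I')$ and exploit its full structure.

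First I would clear denominators and collect terms so that~\eqref{ec1} takes the form $3pk^{2}-3Bk+C\ge 0$, with $B$ and $C$ depending only on $n$, $t$ and $p$. Since the leading coefficient $3p$ is positive, this inequality holds precisely when $k$ lies outside the interval determined by the two roots of the quadratic (provided those roots are real). The natural next step is therefore to compute the discriminant with respect to $k$; a direct expansion should recover the expression $D$ displayed in~\eqref{ecd}.

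The decisive step is to decide when $D\ge 0$. I would view $D$ itself as a quadratic polynomial in $n$, observe that its discriminant $\Delta=432\,tp^{2}(t+p)(3t^{2}+3pt-4+p^{2})$ is manifestly nonnegative, and solve $D\ge 0$ for $n$. Of the two resulting half-lines for $n$, the lower one is incompatible with the standing hypothesis $t<n$ of Theorem~\ref{te1}, so only the inequality~\eqref{ecn} hypothesised in the statement survives. Under this assumption the two roots of the quadratic in $k$ are real, and the sign analysis above forces the dichotomy that either~\eqref{ect2} holds or the opposite inequality~\eqref{eck} holds.

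The main obstacle is then to rule out the second alternative. I expect this to require a separate combinatorial argument, independent of the quadratic manipulations above, which is precisely the content of Lemma~\ref{le1} stated below. Once that lemma is invoked, the branch~\eqref{eck} is excluded and~\eqref{ect2} remains as the only viable conclusion, giving the theorem.
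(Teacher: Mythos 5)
Your proposal is correct and follows essentially the same route as the paper: rewrite~\eqref{ec1} as a quadratic inequality $3pk^2-3Bk+C\ge 0$ in $k$, compute its discriminant $D$, analyse the sign of $D$ as a quadratic in $n$ (whose own discriminant $\Delta$ is positive) to see that the hypothesis~\eqref{ecn} is exactly what makes the roots in $k$ real, and then discard the upper branch~\eqref{eck} by Lemma~\ref{le1}. No gaps.
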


\begin{Lemma} \label{le1}
Conditions~\eqref{ecn} and~\eqref{eck} do not hold
simultaneously.
\end{Lemma}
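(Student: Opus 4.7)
The plan is to argue by contradiction. Assume both \eqref{ecn} and \eqref{eck} hold. I would combine \eqref{eck} with the Ishaq-type upper bound $k=\sdepth(I')\le(n+2)/2+p$ already used in the proof of Theorem~\ref{te1}, and extract a ceiling on $n$ that contradicts \eqref{ecn}. The heuristic reason to expect this to succeed: \eqref{eck} by itself already demands $k\ge n+p/2+1$ (dropping the non-negative tail $t(n-t)/p+\sqrt{D}/(6p)$), which exceeds $(n+2)/2+p$ as soon as $n>p$, so the two bounds can coexist only when $n$ is quite small relative to $p$; meanwhile \eqref{ecn} forces $n$ to be rather large.

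First I would subtract the Ishaq bound from the lower bound supplied by \eqref{eck} to get
\[
\frac{\sqrt{D}}{6p}\le \frac{p-n}{2}-\frac{t(n-t)}{p}.
\]
Non-negativity of the left-hand side then forces $p(p-n)\ge 2t(n-t)$, which rearranges to $n\le (p^2+2t^2)/(p+2t)$. Next I would observe the identity $(p^2+2t^2)/(p+2t)-t+p/2=3p^2/[2(p+2t)]$, so that the desired contradiction with \eqref{ecn} reduces to the purely algebraic inequality
\[
\sqrt{1+\frac{p^2-4}{3t(t+p)}}>\frac{3p}{2(p+2t)}.
\]

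Since $p\ge 2$ makes both sides positive, I would square and clear the common denominator $12t(t+p)(p+2t)^2$, turning the inequality into
\[
4p^4+p^3t+49p^2t^2+96pt^3+48t^4>16p^2+64pt+64t^2.
\]
Regrouping as $4p^2(p^2-4)+p^3t+(49p^2-64)t^2+32pt(3t^2-2)+48t^4>0$ makes each bracket manifestly non-negative for $p\ge 2,\;t\ge 1$, and summands such as $p^3t\ge 8$ guarantee strict positivity.

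The hard part will be the algebraic bookkeeping in this last polynomial step: the expansions of $12t(t+p)(p+2t)^2$ and $4(p+2t)^2(p^2-4)$ are routine, but finding a sum-of-non-negatives decomposition that uniformly handles all admissible $(t,p)$ (including boundary cases like $p=2$ where $4p^2(p^2-4)$ vanishes) requires some care. Everything else is a short manipulation of \eqref{ecn}, \eqref{eck}, and the Ishaq bound.
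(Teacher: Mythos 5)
Your proof is correct, and its opening move coincides with the paper's: both combine \eqref{eck} with the bound $k\le p+1+n/2$ from \cite[Theorem 2.11]{I} to deduce, via non-negativity of $\sqrt{D}/(6p)$, that $p(p-n)\ge 2t(n-t)$, i.e.\ $n\le (p^2+2t^2)/(p+2t)$ (this is essentially \eqref{ectrei} in the paper). Where the two arguments part ways is in how the contradiction with \eqref{ecn} is reached. The paper first weakens \eqref{ecn} to $n>t+p/2$, deduces $p>4t$, and then bootstraps through successively sharper numerical estimates of the radical ($n>t+0.91p$, hence $p>31t$, hence $n>p$), split into the cases $t\ge 2$ and $t=1$, finally contradicting $p>n$. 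You instead compare the lower bound on $n$ from \eqref{ecn} and the upper bound just derived head-on: your identity $(p^2+2t^2)/(p+2t)=t-p/2+3p^2/(2(p+2t))$ reduces everything to the single inequality $\sqrt{1+(p^2-4)/(3t(t+p))}>3p/(2(p+2t))$, and after clearing denominators to $4p^4+p^3t+49p^2t^2+96pt^3+48t^4>16p^2+64pt+64t^2$; I verified that the expansion of $12t(t+p)(p+2t)^2+4(p+2t)^2(p^2-4)-27p^2t(t+p)$ matches this, and your regrouping $4p^2(p^2-4)+p^3t+(49p^2-64)t^2+32pt(3t^2-2)+48t^4>0$ is indeed a sum of non-negative terms with at least one (e.g.\ $p^3t\ge 8$) strictly positive for $p\ge 2$, $t\ge 1$. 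So your route is exact and case-free, at the cost of one polynomial expansion, whereas the paper's is computation-light per step but requires the iteration and the case split on $t$.
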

\begin{proof}
Suppose that both inequalities~\eqref{ecn} and~\eqref{eck}
are satisfied. From the relation $k\le p+1+n/2$ known
from~\cite[Theorem 2.11]{I} it results, on the one hand, that
$p>n$ and, on the other hand, that $p(p-n)>2t(n-t)$, so that
\beq{ectrei}
p^2+2t^2>(p+2t)n.
\eeq
From ~\eqref{ecn} we obtain in particular
\[
 n>t+\frac{p}{2}.
\]
This and~\eqref{ectrei} give
\beq{ec5}
p>4t.
\eeq

We shall discuss two cases.

\textbf{Case $t\ge 2$.}
It is easily seen that the function $p\mapsto
\frac{p^2-4}{3t(t+p)}$ is increasing. Therefore
\[
 \frac{p^2-4}{3t(t+p)}>\frac{16 t^2-4}{15t^2}\ge 1.
\]
From~\eqref{ecn} it then follows $n>t+p(\sqrt{2}-0.5)
>t+0.91 p$. Using this in~\eqref{ectrei}, we get
$0.09 p>2.82 t$, whence $p>31 t$.  Then
\[
 \frac{p^2-4}{3t(t+p)}>\frac{961 t^2-4}{96t^2}\ge 10,
\]
so that $n>t+(\sqrt{11}-0.5)p>p$. This is a contradiction,
which shows that our assumption is false in this case.

\textbf{Case $t=1$.}
From $p>4$ we now get
\[
 \frac{p^2-4}{3(1+p)} \ge \frac{7}{6}
\]
and $n\ge 1+\left( \sqrt{\frac{13}{6}}-0.5\right) p>0.97p
+1$. Using this lower bound for $n$ in~~\eqref{ectrei},
we get $0.03 p>2.94 $, and therefore $p>98 >31 t$.  We
have seen that this  contradicts $p>n$.
\end{proof}

\noindent
\begin{Example}  \label{ex25}
For $n=7$, $t=3$, $p=5$, the latter theorem
gives $k\le 7$, while Theorem~\ref{te1} yields a slightly
weaker bound $k\le 8$. However, for  $n=66$, $t=2$, $p=3$
one gets $k\le 42$ by using Theorem~\ref{te2} and $k\le 41$
when applying Theorem~\ref{te1}.
\end{Example}
\bigskip
\begin{Corollary} \label{co1}
Let $I=Q\bigcap Q'$ be a monomial ideal in
$S=K[x_1,\ldots,x_n]$ where $Q$ and $Q'$ are monomial
primary ideals in $S$ such that $\sqrt{Q}=(x_1,\ldots,x_t)$
and $\sqrt{Q'}=(x_{r+1},\ldots,x_n)$ for some integers
$1\leq r\leq t<n$. Then
\begin{multline*}
\sdepth(I)\leq 2+\\ \frac{\Big(\!\!
                          \begin{array}{c}
                            n-t+r \\
                            3 \\
                          \end{array}
                        \!\!\Big)
                        -
                        \Big(\!\!
                          \begin{array}{c}
                            r \\
                            3 \\
                          \end{array}
                        \!\!\Big)
                        -
                        \Big(\!\!
                          \begin{array}{c}
                            n-t \\
                            3 \\
                          \end{array}
                        \!\!\Big)
                        +(t-r)\Big(\!\!
                           \begin{array}{c}
                             n-t+r \\
                             2 \\
                           \end{array}
                         \!\!\Big)+(n-t+r)\Big(\!\!
                                   \begin{array}{c}
                                     t-r \\
                                     2 \\
                                   \end{array}
                                 \!\!\Big)+\Big(\!\!
                                           \begin{array}{c}
                                             t-r \\
                                             3 \\
                                           \end{array}
                                         \!\!\Big)
}{r(n-t)+(n-t+r)(t-r)-\frac{(t-r)(n-t+r+2)}{4}
}.
\end{multline*}
\end{Corollary}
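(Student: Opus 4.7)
The plan is to reduce the corollary to a direct application of Theorem~\ref{te1} by passing to $\sqrt{I}$ and recognising the radical as an ideal of the form already handled there. First I compute
\[
\sqrt{I}=\sqrt{Q}\cap\sqrt{Q'}=(x_1,\ldots,x_t)\cap(x_{r+1},\ldots,x_n).
\]
A short case analysis on whether the support of a monomial meets the overlap $\{x_{r+1},\ldots,x_t\}$ or not then yields the elementary identity
\[
\sqrt{I}=(x_{r+1},\ldots,x_t)+\bigl((x_1,\ldots,x_r)\cap(x_{t+1},\ldots,x_n)\bigr).
\]
Setting $S_0=K[x_1,\ldots,x_r,x_{t+1},\ldots,x_n]$ (on $n-t+r$ variables) and $J_0=(x_1,\ldots,x_r)\cap(x_{t+1},\ldots,x_n)\subset S_0$, one may view $S=S_0[x_{r+1},\ldots,x_t]$ and $\sqrt{I}=(J_0,x_{r+1},\ldots,x_t)$, which is exactly the shape to which Theorem~\ref{te1} applies, with parameters $\bar n=n-t+r$, $\bar t=r$, $\bar p=t-r$.

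Substituting these parameters into the bound of Theorem~\ref{te1} is then a mechanical check: one has $\binom{\bar n}{3}=\binom{n-t+r}{3}$, $\binom{\bar t}{3}=\binom{r}{3}$, $\binom{\bar n-\bar t}{3}=\binom{n-t}{3}$, while the denominator $\bar t(\bar n-\bar t)+\bar n\bar p-\bar p(\bar n+2)/4$ becomes $r(n-t)+(n-t+r)(t-r)-(t-r)(n-t+r+2)/4$, and the remaining binomials $\bar p\binom{\bar n}{2}$, $\bar n\binom{\bar p}{2}$, $\binom{\bar p}{3}$ transform analogously into $(t-r)\binom{n-t+r}{2}$, $(n-t+r)\binom{t-r}{2}$, $\binom{t-r}{3}$. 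The result is exactly the right-hand side stated in the corollary.

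The main obstacle is the missing link $\sdepth(I)\le\sdepth(\sqrt{I})$, which is what converts the bound on $\sdepth(\sqrt{I})$ supplied by Theorem~\ref{te1} into a bound for the possibly non-squarefree ideal $I$. My preferred route would be via polarisation: $I^{\mathrm{pol}}$ is a squarefree monomial ideal in a larger polynomial ring, $\sdepth(I)$ is controlled by $\sdepth(I^{\mathrm{pol}})$ via known comparison results, and the Herzog--Vl\u adoiu--Zhang poset of $I^{\mathrm{pol}}$ is governed by that of $\sqrt{I}$. Should a clean appeal to polarisation not be immediate, the fact that $I$ is the intersection of just two monomial primary ideals with comparable supports should open the way to a direct poset-theoretic argument modelled on the proof of Theorem~\ref{te1}, where the counting of cardinality-$3$ subsets is adapted to the characteristic poset associated to the specific bi-primary structure of $I$.
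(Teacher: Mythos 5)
Your reduction is exactly the paper's: the authors observe that $\sqrt{I}=(P'\cap S',x_{r+1},\ldots,x_t)$ with $P'=(x_1,\ldots,x_r)\cap(x_{t+1},\ldots,x_n)\subset S'=K[x_1,\ldots,x_r,x_{t+1},\ldots,x_n]$ and then invoke Theorem~\ref{te1} with the parameters $(n-t+r,\,r,\,t-r)$ together with \cite[Theorem 2.1]{I}, which is precisely the inequality $\sdepth(I)\le\sdepth(\sqrt{I})$ that you flag as the missing link. So your proposal is correct and follows the same route; the only thing you lacked is the knowledge that this comparison of Stanley depths of an ideal and its radical is an already published result, which the paper simply cites rather than proves.
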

\begin{proof}
Note that $\sqrt{I}=(P'\cap S',x_{r+1},\ldots,x_t)$ where $S'=K[x_1,\ldots,x_r,x_{t+1},\ldots,x_n]$ and
$P'=(x_1,\ldots,x_r)\cap(x_{t+1},\ldots,x_n)\subset S'$.
Now we can apply Theorem~\ref{te1} and \cite[Theorem 2.1]{I}.
\end{proof}

\begin{Example}
Let $I=Q\cap Q'$ be a monomial ideal in $S=K[x_1,\ldots,x_8]$,
where $Q$ and $Q'$ are monomial primary ideals with
$\sqrt{Q}=(x_1,\ldots,x_6)$ and $\sqrt{Q'}=(x_5,\ldots,x_8)$.
Then by~\cite[Proposition 2.13]{I} we have $\sdepth{I}\leq 6$
and by our Corollary~\ref{co1} we have $\sdepth(I)\leq 5$.
\end{Example}

\section{Upper bounds for the Stanley depth of squarefree
Veronese ideal when some variables are added}  \label{sec3}

We denote by $I_{n,d}$ the squarefree Veronese ideal of degree
$d$ in the polynomial ring in $n$ variables over a field $K$.
Our first bound for the Stanley depth of such an ideal is given
by the next result.

\begin{Theorem} \label{te3}
 Let $K$ be a field and $n$, $p\ge 2$  integers.
Let $I_{n,2}$ be the squarefree Veronese ideal in
$S=K[x_1,\ldots,x_n]$ and $I'=(I_{n,2},x_{n+1},
\ldots,x_{n+p})\subseteq S'=S[x_{n+1},\ldots,x_{n+p}]$.
Then
\[\sdepth(I')\leq 2+\frac{\left(
                            \begin{array}{c}
                              n+p \\
                              3 \\
                            \end{array}
                          \right)
}{\left(
    \begin{array}{c}
      n \\
      2 \\
    \end{array}
  \right)+np-p- \frac{p}{2} \lfloor(
                                     \begin{array}{c}
                                       n \\
                                       3 \\
                                     \end{array}
                                   )/(
                                     \begin{array}{c}
                                       n \\
                                       2 \\
                                     \end{array}
                                   )
  \rfloor
}.
\]
\end{Theorem}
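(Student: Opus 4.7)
The plan is to mirror the counting argument used in the proof of Theorem~\ref{te1}. Let $k=\sdepth(I')$ and fix a partition $\mathcal{P}\colon P_{I'}=\bigcup_{i=1}^{s}[C_i,D_i]$ of the characteristic poset whose associated Stanley decomposition realizes this value, chosen so that $|D_i|=k$ whenever $C_i\subsetneq D_i$. Every squarefree monomial of degree $3$ in the $n+p$ variables lies in $I'$: if it involves some $x_{n+l}$ it is divisible by that variable, and otherwise all three of its variables belong to $\{x_1,\dots,x_n\}$ and it therefore contains a quadratic squarefree submonomial already lying in $I_{n,2}$. Hence $P_{I'}$ contains exactly $\binom{n+p}{3}$ subsets of cardinality~$3$, and the strategy is to bound this total from below via the contributions of the intervals.

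The singletons of $P_{I'}$ are precisely $\{x_{n+1}\},\dots,\{x_{n+p}\}$, so $\mathcal{P}$ contains exactly $p$ intervals with $|C_j|=1$, each producing $\binom{k-1}{2}$ cardinality-$3$ subsets. Among the cardinality-$2$ elements of $P_{I'}$ one distinguishes three flavors: the $\binom{n}{2}$ subsets contained in $\{x_1,\dots,x_n\}$, the $np$ mixed subsets, and the $\binom{p}{2}$ subsets of $\{x_{n+1},\dots,x_{n+p}\}$. Since no singleton $\{x_{n+l}\}$ is a subset of a flavor-one pair, each of those $\binom{n}{2}$ sets must coincide with some $C_l$ of cardinality~$2$. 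The $np+\binom{p}{2}$ subsets of the other two flavors can be absorbed into the $p$ intervals with $|C_j|=1$, but each such interval absorbs only $k-1$ cardinality-$2$ subsets, for a total of at most $p(k-1)$; hence at least $\binom{n}{2}+np+\binom{p}{2}-p(k-1)$ further intervals have $|C_l|=2$, each contributing $k-2$ cardinality-$3$ subsets. Combining the two contributions and factoring $(k-2)$ out of the left-hand side yields
\[
(k-2)\Bigl[\binom{n}{2}+np+\binom{p}{2}-\tfrac{p(k-1)}{2}\Bigr]\;\leq\;\binom{n+p}{3}.
\]

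To conclude, I must remove the $k$ appearing in the bracketed coefficient by inserting an a priori upper bound for $k$. The sharp estimate for squarefree Veronese ideals (the results cited as \cite{MC} and \cite{S}) gives $\sdepth(I_{n,2})\leq 2+\lfloor\binom{n}{3}/\binom{n}{2}\rfloor$, and iterating Ishaq's variable-adjunction inequality from \cite{I} $p$ times yields $k\leq p+2+\lfloor\binom{n}{3}/\binom{n}{2}\rfloor$. Substituting $k-1\leq p+1+\lfloor\binom{n}{3}/\binom{n}{2}\rfloor$ into the bracket and using the cancellation $\binom{p}{2}-\tfrac{p(p+1)}{2}=-p$ reduces the bracket to the denominator $\binom{n}{2}+np-p-\tfrac{p}{2}\lfloor\binom{n}{3}/\binom{n}{2}\rfloor$ that appears in the statement, and the claim follows by division. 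The main obstacle is the combinatorial bookkeeping in the middle paragraph: one must verify that the $p(k-1)$ absorbed cardinality-$2$ subsets are counted disjointly (which is automatic from the partition property), that the purely-small subsets $\{x_i,x_j\}\subset\{x_1,\dots,x_n\}$ have no smaller element of $P_{I'}$ available to absorb them, and that intervals with $|C_l|\geq 3$ can safely be discarded when forming a lower bound. Once this is secured, the algebraic simplification is routine, guided by the cancellation $\binom{p}{2}-\tfrac{p(p+1)}{2}=-p$ which is exactly what produces the ``$-p$'' term in the final denominator.
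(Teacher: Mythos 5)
Your proposal is correct and follows essentially the same route as the paper: the same interval-counting of cardinality-3 subsets in $P_{I'}$ (contributions $p\binom{k-1}{2}$ from the singleton bottoms and $(k-2)$ from each of the at least $\binom{n}{2}+np+\binom{p}{2}-p(k-1)$ intervals with two-element bottoms), followed by inserting the a priori bound $k\le p+2+\lfloor\binom{n}{3}/\binom{n}{2}\rfloor$ from the cited results to eliminate $k$ from the bracket. Your verification that $P_{I'}$ contains all $\binom{n+p}{3}$ three-element subsets and that the pairs inside $\{x_1,\dots,x_n\}$ must themselves be interval bottoms is slightly more explicit than the paper's, but the argument is the same.
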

\begin{proof}
Note that $I'$ is a squarefree monomial ideal generated by
monomials of degree 2 and 1. Let $k=\sdepth(I')$. The poset
$P_{I'}$ has the partition
$\mathcal{P}: P_{I'}=\bigcup_{i=1}^s[C_i,D_i]$ satisfying $\sdepth(\mathcal{D(\mathcal{P})})=k$, where
$\mathcal{D(\mathcal{P})}$ is the Stanley decomposition of
$I'$ with respect to the partition $\mathcal{P}$. We may
choose $\mathcal{P}$ such that $|D|=k$ whenever $C\neq D$
in the interval $[C,D]$.

For each interval $[C_i,D_i]$ in
$\mathcal{P}$ with $|C_i|=2$, when in the corresponding
monomial both variables belong to $\{x_1,\ldots,x_n\}$ we
have at least $|D_i|-|C_i|$ subsets of cardinality 3 in this
interval. Now for each interval $[C_j,D_j]$, when $|C_j|=1$
we have at least
$\left(
     \begin{array}{c}
           k-1 \\
             2 \\
    \end{array}
\right)
$ subsets of cardinality 3 and we have $p$ such intervals.

Now we consider those intervals $[C_l,D_l]$ such that
$|C_l|=2$ and the corresponding monomial is of the form
$x_l x_\lambda$, where $x_l \in \{x_{n+1},\ldots,
x_{n+p}\}$. Now either $x_\lambda\in \{x_1,\ldots,x_n\}$
or $x_\lambda\in \{x_{n+1},\ldots,x_{n+p}\}$. If
$x_\lambda\in \{x_1,\ldots,x_n\}$, then we have $np$
such intervals and each of them has at least $k-2$ subsets
of cardinality 3. If
$x_\lambda\in\{x_{n+1},\ldots,x_{n+p}\}$ then we have
$\left(
   \begin{array}{c}
          p \\
          2 \\
   \end{array}
\right)
$ such intervals, each of which having at least $k-2$ subsets
of cardinality 3. Some subsets of cardinality 2 of the form
$C_l$ already appear in the interval when the interval starts
from a single variable, and there are $p(k-1)$ such subsets.
Since the partition is disjoint, we subtract this from the
total number of $C_l$'s, so that we have at least
\[\left(
    \begin{array}{c}
      n \\
      2 \\
    \end{array}
  \right)(k-2)+p\left(
                               \begin{array}{c}
                                 k-1 \\
                                 2 \\
                               \end{array}
                             \right)+\Big[ np+\left(
                                           \begin{array}{c}
                                             p \\
                                             2 \\
                                           \end{array}
                                         \right)-p(k-1)
                             \Big] (k-2)
\]
subsets of cardinality 3, and this number is less than or equal
to the total number of subsets of cardinality 3. So
\beq{ec6}
\left(
    \begin{array}{c}
      n \\
      2 \\
    \end{array}
  \right)(k-2)+p\left(
                               \begin{array}{c}
                                 k-1 \\
                                 2 \\
                               \end{array}
                             \right)+\Big[ np+\left(
                                           \begin{array}{c}
                                             p \\
                                             2 \\
                                           \end{array}
                                         \right)-p(k-1)
                             \Big] (k-2) \\
\leq \left(
                          \begin{array}{c}
                            n+p \\
                            3 \\
                          \end{array}\right) .
\eeq
Now
\begin{equation} \label{ec7}
\begin{split}
\left(
    \begin{array}{c}
      n \\
      2 \\
    \end{array}
  \right)(k-2)+p
    \left(
       \begin{array}{c}
              k-1 \\
                2 \\
       \end{array}
    \right)+\Big[ np+\left(
       \begin{array}{c}
                p \\
                2 \\
       \end{array}
                \right)-p(k-1)
                \Big] (k-2)\\
=\bigg[\left(
                               \begin{array}{c}
                                        n \\
                                        2 \\
                               \end{array}
                               \right)+np+\left(
                                          \begin{array}{c}
                                                 p \\
                                                 2 \\
                                          \end{array}
                                          \right)+\frac{p}{2}(1-k)
                             \bigg](k-2) .
\end{split}
\end{equation}
Since by~\cite[Theorem 1.2]{S} we know that
$\sdepth(I_{n,2})\leq \lfloor(
                      \begin{array}{c}
                               n \\
                               3 \\
                      \end{array}
                       )
/(
    \begin{array}{c}
      n \\
      2 \\
    \end{array}
  )
\rfloor + 2$, applying~\cite[Theorem 2.11]{I} we get
$k\leq \lfloor(
       \begin{array}{c}
                n \\
                3 \\
       \end{array}
       )
/(
    \begin{array}{c}
      n \\
      2 \\
    \end{array}
  )
\rfloor + 2+p$.

Putting
\[-k\geq -\lfloor(
          \begin{array}{c}
                 n \\
                 3 \\
          \end{array}
         )
/(
    \begin{array}{c}
      n \\
      2 \\
    \end{array}
  )
\rfloor - 2-p
\]
in~\eqref{ec7}, we get
\begin{equation*}
\begin{split}
\left(
    \begin{array}{c}
      n \\
      2 \\
    \end{array}
  \right)(k-2)+p\left(
                \begin{array}{c}
                     k-1 \\
                       2 \\
                \end{array}
                \right)+\Big[ np+\left(
                            \begin{array}{c}
                                    p \\
                                    2 \\
                            \end{array}
                            \right)-p(k-1)
                             \Big] (k-2)\\
\geq\bigg[\left(
          \begin{array}{c}
                  n \\
                  2 \\
          \end{array}
          \right)+np+\left(
                     \begin{array}{c}
                             p \\
                             2 \\
                     \end{array}
                     \right)+\frac{p}{2}\left( 1-\lfloor(
                                           \begin{array}{c}
                                                    n \\
                                                    3 \\
                                           \end{array}
                                           )
/(
    \begin{array}{c}
      n \\
      2 \\
    \end{array}
  )
\rfloor - 2-p\right)
                             \bigg](k-2).
\end{split}
\end{equation*}
The required result is obtained by  combining the above
inequality with~\eqref{ec6}.
\end{proof}

\begin{Example} \label{ex22}
Let $S=K[x_1,\ldots,x_5]$ and $I_{5,2}$ be the squarefree
Veronese ideal. Then by~\cite[Corollary 1.5]{MC}
or~\cite[Theorem 1.2]{S} we have $\sdepth(I_{5,2})=3$.

Now let $I'=(I_{5,2},x_6,x_7)$ be the monomial ideal in
$S'=S[x_6,x_7]$. By~\cite[Lemma 2.11]{I} we have
$\sdepth(I')\leq 5$, while  our Theorem~\ref{te3} yields
$\sdepth(I')\leq 4$.
\end{Example}

\begin{Example}
Let $S=K[x_1,\ldots,x_{11}]$ and $I_{11,2}$ be the squarefree
Veronese ideal. Then by~\cite[Theorem 1.2]{S} we have
$4\leq\sdepth(I_{11,2})\leq 5$.

Let $I'=(I_{11,2},x_{12},\ldots,x_{17})$ be the monomial ideal
in $S'=S[x_{12},\ldots,x_{17}]$, then by~\cite[Lemma 2.11]{I}
we have $\sdepth(I')\leq 11$ and by Theorem~\ref{te3}
$\sdepth(I')\leq 8$.
\end{Example}

If we impose some condition on $n$ and $p$ we can improve the
bound given in Theorem~\ref{te3}.

 The last expression given in the proof of Theorem~\ref{te3}
is equivalent to

\[
 0\le 3 pk^2-3(n^2- n+2 n p+ p^2+2 p)k
+ n^3+3 n^2 p+3 n^2+3 n p^2+6 n p-4 n+p^3+3 p^2+2 p.
\]

The quadratic in $k$ has discriminant
\[
E:=9n^4 +(24 p-18)n^3+(18 p^2-36 p+9)n^2-(18 p^2-12p)n
+12 p^2-3 p^4
\]
obviously positive for $n\ge p$. A simple computation
convince ourselves that the discriminant is actually positive
for $n\ge p-1$. Since, on the one hand, one has $E> 9(p-1)^4$
for $p\ge 2$, $n\ge \max \{ 2,p-1 \}$, and, on the other hand,
from~\cite[Theorem 1.2]{S} and~\cite[Theorem 2.11]{I}
it is  known that $k\le p+2+\lfloor (n-2)/3 \rfloor$,
we conclude that the next result holds.

\begin{Theorem} \label{te4}
Keep the notation and hypotheses from Theorem~\ref{te3}. Then
for $n\ge p-1$ one has
\[
k \le \frac{n(n-1)}{2p}+\frac{p}{2}+n+1-\frac{\sqrt{E}}{6p},
\]
where
\[
E=9n^4 +(24 p-18)n^3+(18 p^2-36 p+9)n^2-(18 p^2-12p)n
+12 p^2-3 p^4.
\]
 \end{Theorem}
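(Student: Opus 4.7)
The plan is to treat the inequality obtained at the end of the proof of Theorem~\ref{te3} as a quadratic in $k$ with positive leading coefficient $3p$, and to use its discriminant $E$ to extract a bound on $k$. First I would write out the two roots of the quadratic
\[
3pk^2 - 3(n^2-n+2np+p^2+2p)k + (n^3+3n^2p+3n^2+3np^2+6np-4n+p^3+3p^2+2p).
\]
A direct computation yields
\[
k_{\pm} \;=\; \frac{n(n-1)}{2p}+n+\frac{p}{2}+1 \;\pm\; \frac{\sqrt{E}}{6p}.
\]
Since the leading coefficient of the quadratic is positive, the inequality furnished by Theorem~\ref{te3} forces either $k\le k_{-}$ or $k\ge k_{+}$; the theorem claims the former, so the core task is to rule out $k\ge k_{+}$.

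Next, under the hypothesis $n\ge p-1$ (and the standing $n,p\ge 2$), I would verify $E\ge 0$. Viewing $E$ as a quartic in $n$ with positive leading coefficient $9$, this reduces to an elementary polynomial check (alluded to in the paragraph preceding the theorem as a ``simple computation''). The same check, pushed slightly, gives the stronger estimate $E>9(p-1)^4$ for $p\ge 2$ and $n\ge\max\{2,p-1\}$. This is the key input, because it yields
\[
\frac{\sqrt{E}}{6p} \;>\; \frac{(p-1)^2}{2p},
\]
and hence a clean lower bound for $k_{+}$.

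The decisive step is then to compare this lower bound for $k_{+}$ with the known upper estimate $k\le p+2+\lfloor(n-2)/3\rfloor$, which follows by combining \cite[Theorem~1.2]{S} (giving $\sdepth(I_{n,2})\le 2+\lfloor(n-2)/3\rfloor$) with the Ishaq inequality \cite[Theorem~2.11]{I}. Substituting the lower bound for $\sqrt{E}/(6p)$ and simplifying $(p-1)^2/(2p)-p/2$ down to $-1+1/(2p)$, a short calculation shows that
\[
k_{+} \;>\; \frac{n(n-1)}{2p}+n+1+\frac{1}{2p}-\frac{2}{3} \;>\; p+2+\frac{n-2}{3}
\]
for $n\ge 2$ (the quadratic growth $n(n-1)/(2p)$ in the left-hand side dominates everything else). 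This exceeds the integer bound $p+2+\lfloor(n-2)/3\rfloor$, so the case $k\ge k_{+}$ is impossible. Therefore $k\le k_{-}$, which is exactly the stated bound.

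The main obstacle I expect is the small-case book-keeping: verifying $E>9(p-1)^4$ uniformly for $n\ge\max\{2,p-1\}$ and checking that $k_{+}$ dominates the Ishaq--Shen upper bound in the tightest cases (for example $p=2,3$ with $n=p-1$ or $n=2$) requires a careful, but ultimately elementary, estimate of a quartic polynomial in $n$ at its boundary values.
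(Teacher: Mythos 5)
Your strategy is exactly the paper's: view the inequality from the proof of Theorem~\ref{te3} as a quadratic in $k$ with leading coefficient $3p$, identify $E$ as its discriminant so that the roots are $k_{\pm}=\frac{n(n-1)}{2p}+n+\frac{p}{2}+1\pm\frac{\sqrt{E}}{6p}$, check $E>9(p-1)^4$ for $p\ge 2$, $n\ge\max\{2,p-1\}$, and exclude the branch $k\ge k_{+}$ by comparison with $k\le p+2+\lfloor (n-2)/3\rfloor$ from \cite{S} and \cite{I}. This is precisely the argument sketched in the paragraph preceding Theorem~\ref{te4}.

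One arithmetic slip: in your final chain you combine $\frac{(p-1)^2}{2p}$ and $\frac{p}{2}$ with a minus sign, obtaining $-1+\frac{1}{2p}$ and hence the intermediate bound $k_{+}>\frac{n(n-1)}{2p}+n+1+\frac{1}{2p}-\frac{2}{3}$; but that quantity is \emph{not} greater than $p+2+\frac{n-2}{3}$ in general (e.g.\ for $n=p=2$ it is about $2.08$ versus $5$). The two terms in fact add: since $k_{+}$ contains $+\frac{p}{2}$ and $\frac{\sqrt{E}}{6p}>\frac{(p-1)^2}{2p}$, one has
\[
\frac{p}{2}+\frac{(p-1)^2}{2p}=p-1+\frac{1}{2p},
\qquad\text{so}\qquad
k_{+}>\frac{n(n-1)}{2p}+n+p+\frac{1}{2p},
\]
and the comparison with $p+2+\frac{n-2}{3}$ reduces to $\frac{n(n-1)}{2p}+\frac{2n}{3}+\frac{1}{2p}>\frac{4}{3}$, which holds for all $n\ge 2$. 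With that correction the proof closes as intended.
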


\begin{Corollary}
Let $S'=K[x_1,\ldots,x_n,x_{n+1},\ldots,x_{n+p}]$ be a polynomial
ring and let $P_i=(x_1,\ldots,x_{i-1},x_{i+1},\ldots,x_n)$,
$i=1,\ldots, n$, be monomial prime ideals in $S'$. Denote
$Q_i=(P_i,x_{n+1},\ldots,x_{n+p})$. If
$\Ass(S'/I')=\{Q_1,\ldots,Q_n\}$, then
\[\sdepth(I)\leq 2+\frac{\left(
                            \begin{array}{c}
                              n+p \\
                              3 \\
                            \end{array}
                          \right)
}{\left(
    \begin{array}{c}
      n \\
      2 \\
    \end{array}
  \right)+np-p- \frac{p}{2} \lfloor(
                                     \begin{array}{c}
                                       n \\
                                       3 \\
                                     \end{array}
                                   )/(
                                     \begin{array}{c}
                                       n \\
                                       2 \\
                                     \end{array}
                                   )
  \rfloor
}.
\]
\end{Corollary}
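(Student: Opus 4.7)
The plan is to reduce the corollary to Theorem~\ref{te3} by replacing $I'$ with its radical. For a monomial ideal, the radical equals the intersection of the minimal primes, and under our hypothesis every associated prime $Q_i$ is in fact minimal: no $Q_i$ contains another, since for $i\neq j$ the ideal $Q_i$ contains $x_j$ while $Q_j$ does not. Hence $\sqrt{I'}=\bigcap_{i=1}^{n}Q_i$, and the core of the proof is to identify this intersection with the ideal to which Theorem~\ref{te3} directly applies.

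\textbf{Computing the radical.} Write $L=(x_{n+1},\ldots,x_{n+p})$, so that $Q_i=P_i+L$. Because the variables generating $L$ are disjoint from those generating each $P_i$, I would argue monomial-wise: a monomial $m$ belongs to $\bigcap_{i=1}^{n}(P_i+L)$ iff either some $x_{n+j}$ divides $m$ (so $m\in L$), or $m$ involves only $x_1,\ldots,x_n$, in which case the condition forces $m\in P_i$ for every $i$. Therefore
\[
\sqrt{I'}\;=\;\bigcap_{i=1}^{n}(P_i+L)\;=\;\Bigl(\bigcap_{i=1}^{n}P_i\Bigr)+L.
\]
The classical primary decomposition of the squarefree Veronese ideal of degree two yields $\bigcap_{i=1}^{n}P_i=I_{n,2}$, and consequently $\sqrt{I'}=(I_{n,2},x_{n+1},\ldots,x_{n+p})$.

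\textbf{Conclusion and anticipated obstacle.} Invoking the monotonicity inequality $\sdepth(I')\le\sdepth(\sqrt{I'})$ provided by \cite[Theorem~2.1]{I}, which is exactly the tool employed in the proof of Corollary~\ref{co1}, and then applying Theorem~\ref{te3} to $\sqrt{I'}$, produces verbatim the stated upper bound. The only delicate point is the intersection identity, but this is routine thanks to the disjointness of the two variable blocks; no genuine obstacle is expected.
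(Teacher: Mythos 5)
Your proof is correct and follows exactly the route the paper intends (the corollary is stated without proof, but the analogous Corollary~\ref{co1} uses the same argument): the incomparability of the $Q_i$ gives $\sqrt{I'}=\bigcap_i Q_i=(I_{n,2},x_{n+1},\ldots,x_{n+p})$, and then $\sdepth(I')\le\sdepth(\sqrt{I'})$ from \cite[Theorem~2.1]{I} combined with Theorem~\ref{te3} yields the bound. The monomial-wise computation of the intersection and the identification $\bigcap_i P_i=I_{n,2}$ are both accurate.
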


\noindent
\begin{Example} For $n=11$, $p=6$, Theorem~\ref{te4}
gives $k\le 7$ instead of  $k\le 8$ cf. Theorem~\ref{te3}.
\end{Example}

\noindent
\begin{Example} For $n=5$, $p=2$, this result
gives $k\le 3$, while Theorem~\ref{te3} yields a slightly
weaker bound $k\le 4$. Therefore, in the situation described
in Example~\ref{ex22} one has
\[
 \sdepth_S(I)= \sdepth_{S'}(I').
\]
\end{Example}

We now prove that Stanley's conjecture is verified by ideals
of the type studied in this section.

\begin{Proposition} \label{pr1}
For positive integers $n$ and $d$,
let $I\subset S=K[x_1,\ldots,x_n]$ be the squarefree
Veronese ideal generated by all monomials of degree $d$ and
$I'=(I,x_{n+1},\ldots,x_{n+m})\subset S'=
S[x_{n+1},\ldots,x_{n+m}]$. Then
Stanley's conjecture holds for the  ideal $I'$.
\end{Proposition}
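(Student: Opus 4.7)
The plan is to reduce the proposition to the case $m=0$, which is already known from \cite{HVZ} (Stanley's conjecture for the Veronese ideal $I$ itself, as recalled in the introduction). Two auxiliary facts are needed: that adjoining the variables $x_{n+1},\ldots,x_{n+m}$ to the ideal leaves the ordinary depth unchanged, and that it cannot decrease the Stanley depth.

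First I would compute $\depth_{S'}(I')$. Since $x_{n+1},\ldots,x_{n+m}\in I'$, one has $S'/I'\cong S/I$ as rings, and the $S'$-action on $S'/I'$ factors through this isomorphism; hence $\depth_{S'}(S'/I')=\depth_S(S/I)$. As $I$ is a proper nonzero ideal, $\depth_S(S/I)<n\le\depth_{S'}(S')-1$, so the depth lemma applied to
\[
0\to I'\to S'\to S'/I'\to 0
\]
gives $\depth_{S'}(I')=\depth_S(S/I)+1=\depth_S(I)$.

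For the Stanley depth I would apply Ishaq's lower bound $\sdepth(J)\le\sdepth(J,x_{n+1})$, taken from \cite[Theorem~2.11]{I} and already recalled in the introduction. Iterating it $m$ times yields $\sdepth_{S'}(I')\ge\sdepth_S(I)$.

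Combining the two steps with $\sdepth_S(I)\ge\depth_S(I)$ (the case $m=0$, i.e.\ Stanley's conjecture for the squarefree Veronese ideal, known from \cite{HVZ}), one concludes
\[
\sdepth_{S'}(I')\ \ge\ \sdepth_S(I)\ \ge\ \depth_S(I)\ =\ \depth_{S'}(I'),
\]
which is Stanley's conjecture for $I'$. The argument is essentially a short assembly of known facts; no real obstacle appears, and the only step that needs checking is the identity $\depth_{S'}(I')=\depth_S(I)$, which is a routine depth-lemma calculation.
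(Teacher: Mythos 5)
Your proposal is correct and follows essentially the same route as the paper: equate $\depth_{S'}(I')$ with $\depth_S(I)$ via $S'/I'\cong S/I$, invoke Stanley's conjecture for the squarefree Veronese ideal itself from \cite{HVZ}, and use the fact that Stanley depth does not decrease under adjoining a variable to the ideal (the paper cites \cite[Lemma 2.1]{S1} for this last step, while you iterate Ishaq's inequality from \cite{I} recalled in the introduction --- the same fact either way). Your explicit depth-lemma verification of $\depth_{S'}(I')=\depth_S(I)$ is a slightly more detailed version of what the paper states without proof.
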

\begin{proof}
From $\depth_{S'}(S'/I')=\depth_S(S/I_{})$ it follows
$\depth_{S'} (I')=\depth_S (I)$. As a consequence of results
established in~\cite{HVZ} (or by
applying~\cite[Corollary~1.2]{MC}), Stanley's conjecture
holds for squarefree  Veronese ideals, so that
$\sdepth_S(I) \ge \depth_S(I)$. By~\cite[Lemma 2.1]{S1},
the sdepth does not decrease when passing from $I$ to $I'$.
Therefore, Stanley's conjecture holds for $I'$, too.
\end{proof}

\section{Comparison of bounds} \label{sec4}

First we compare the bounds provided in Theorems~\ref{te3}
and~\ref{te4}. The outcome of our study is the following.

\begin{Theorem} \label{te5}
 Let $K$ be a field and $n$, $p\ge 2$  integers.
Let $I_{n,2}$ be the squarefree Veronese ideal in
$S=K[x_1,\ldots,x_n]$ and $I'=(I_{n,2},x_{n+1},
\ldots,x_{n+p})\subseteq S'=S[x_{n+1},\ldots,x_{n+p}]$.
If $n\ge p-1$ then the bound for $\sdepth(I')$ given by
Theorem~\ref{te4} is smaller than that given by
Theorem~\ref{te3}.
\end{Theorem}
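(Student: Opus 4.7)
The plan is to exploit the fact that both bounds arise from the same underlying inequality~\eqref{ec6} satisfied by $k=\sdepth(I')$. Writing $A=\binom{n}{2}+np+\binom{p}{2}$ and
\[
F(k) := \Bigl[A+\tfrac{p}{2}(1-k)\Bigr](k-2)-\binom{n+p}{3},
\]
inequality~\eqref{ec6} is the assertion $F(k)\le 0$. Let $B_4$ and $B_3$ denote the right-hand sides of the bounds in Theorems~\ref{te4} and~\ref{te3} respectively, and set $k^*:=p+2+\lfloor(n-2)/3\rfloor$. Theorem~\ref{te4} is obtained by treating $F(k)\le 0$ as a quadratic inequality in $k$ and extracting its smaller root, which is precisely $B_4$. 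Theorem~\ref{te3} arises instead by replacing the factor $1-k$ by the smaller quantity $1-k^*$, yielding a linear inequality whose root is $B_3$. The strategy is to show that this substitution is strictly lossy when $n\ge p-1$.

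Using the defining identity $(B_3-2)\bigl[A-\tfrac{p}{2}(k^*-1)\bigr]=\binom{n+p}{3}$ and subtracting, one immediately gets
\[
F(B_3)=\tfrac{p}{2}(B_3-2)(k^*-B_3).
\]
The quadratic $F$ opens downward with roots $k_-=B_4$ and $k_+$; moreover $k_+>k^*$, as is implicit in the proof of Theorem~\ref{te4} (where the conclusion $k\le k_-$ follows by coupling $F(k)\le 0$ with $k\le k^*<k_+$). Hence, if $2<B_3<k^*$, then $F(B_3)>0$, so $B_3$ lies strictly between the two roots of $F$ and therefore $B_3>k_-=B_4$, which is the desired strict inequality.

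It remains to verify $2<B_3<k^*$ under the hypothesis $n\ge p-1$ (with $n,p\ge 2$). The lower bound is immediate from $\binom{n+p}{3}>0$ and positivity of the denominator in the formula for $B_3$. The upper bound $B_3<k^*$ is equivalent, after clearing denominators, to the polynomial inequality
\[
\binom{n+p}{3}<(p+s)\Bigl[\binom{n}{2}+np-p-\tfrac{ps}{2}\Bigr],\qquad s:=\lfloor(n-2)/3\rfloor,
\]
and this is where I expect the main obstacle to lie. The plan is to split into the three residue classes $n\equiv 0,1,2\pmod 3$ so that the floor becomes an explicit linear function of $n$; in each case the inequality reduces to a quadratic polynomial in $n$, and positivity for $n\ge p-1$ is verified by evaluating at the boundary $n=p-1$ (where it becomes elementary in $p$) and invoking monotonicity in $n$.
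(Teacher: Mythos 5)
Your structural reduction is sound and is genuinely different from the paper's route: the identity $F(B_3)=\tfrac{p}{2}(B_3-2)(k^*-B_3)$ is correct, $F$ opens downward (leading coefficient $-p/2$), and $2<B_3<k^*$ would indeed place $B_3$ strictly between the two roots of $F$ and hence above the smaller root $B_4$; you do not even need the side remark about $k_+>k^*$. This bypasses the square-root manipulations that occupy the paper, which instead writes both bounds explicitly in each residue class of $n$ modulo $3$, reduces $l_i\le u_i$ to an inequality of the form $(\cdot)\sqrt{dv_i}\ge r_i$, checks the sign of $r_i$, squares, and arrives at quadratics $f_1,f_2,f_3$ in $s$ whose roots are then compared with $p-1$.

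The proof is nevertheless incomplete at exactly the point where the hypothesis $n\ge p-1$ must enter: the inequality $B_3<k^*$, i.e.
\[
\binom{n+p}{3}<(p+s)\Bigl[\binom{n}{2}+np-p-\tfrac{ps}{2}\Bigr],\qquad s=\lfloor(n-2)/3\rfloor,
\]
is only announced with a plan, and it is not a formality. Expanding by residue class, the cubic terms in $n$ cancel and the difference (right side minus left side) is, up to a positive factor, precisely the paper's $f_i$ (for instance, for $n=3s+2$ one gets $p\bigl(15s^2+(6p+15)s-p^2+3p+4\bigr)$); the constant term is of order $-p^3$, so positivity genuinely requires a lower bound on $n$ in terms of $p$ and fails for small $n$. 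Your proposed verification "evaluate at $n=p-1$ and invoke monotonicity" needs justification on three counts: monotonicity of a quadratic on the relevant ray must be checked (it does hold, since both nonconstant coefficients of each $f_i$ are positive for $p\ge 2$); the point $n=p-1$ need not belong to the residue class at hand, so one must evaluate at the least admissible integer in each class, and for $p=2$ the binding constraint is $n\ge 2$ rather than $n\ge p-1$; and at the formal boundary the value can degenerate to $0$ (e.g.\ $f_1$ at $s=(p-2)/3$ equals $\tfrac{2}{3}(p-2)(2p-1)(2p-3)$, which vanishes at $p=2$). You should also record that the denominator $\binom{n}{2}+np-p-\tfrac{ps}{2}$ is positive, which you use both for $B_3>2$ and to read off the root location from $F(B_3)>0$. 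Once these verifications are written out, your argument closes and is appreciably shorter than the one in the paper.
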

 Since
\[
 \left\lfloor \binom{n}{3} /  \binom{n}{2}\right\rfloor
= \left\lfloor (n-2)/3\right\rfloor,
\]
we shall distinguish the values of $n$ according to their
residues $\mod 3$.

\medskip

\textbf{Case $ n=3s+1$,  $s\ge 1$.}
The bound given in Theorem~\ref{te3} specialises to
\beq{ec11}
u_1:=\frac{27s^3+27s^2(p+2)+3s(3p^2+10p+5)+p^3+5p}{3s(9s+5p+3)+3p},
\eeq
while that given in Theorem~\ref{te4} becomes in this case
\beq{ec12}
l_1 :=\frac{27s^2+9(2p+1)s+3p^2+12p-\sqrt{dv_1}}{6p},
\eeq
with
\[
dv_1:=729s^4+(648p+486)s^3+(162p^2+324p+81)s^2+(54p^2+36p)s+
12p^2-3p^4.
\]

We want to know for what values of $s$ we have $l_1 \le u_1 $
for all $p\ge 2$,  or equivalently
\[
\left( 9s^2+(5p+3)s+p\right) \sqrt{dv_1} \ge r_1,
\]
where
\[
r_1:=243s^4+(243p+162)s^3+(63p^2+126p+27)s^2+(15p+27p^2-3p^3)s+
2p^2+3p^3-2p^4.
\]

The second derivative of the function function $r_1:[(p-2)/3,
+\infty ) \longrightarrow \RR$ being positive, the first
derivative is at least as large as
\[
r_1'(\frac{p-2}{3})=3(52p^3-153p^2+135p-36).
\]
Since the expression in the right side is positive for $p\ge 2$,
for these values $r_1$  is greater  than or equal to
\[
r_1(\frac{p-2}{3})=2(2p-3)(p-2)(2p-1)^2 ,
\]
which is nonnegative for  $p\ge 2$.
This analysis shows that the desired inequality is equivalent
to that obtained by squaring it, which, with some computer
assistance, is found to be
\[
  p ^2 (3 s + 1 + p) (3 s + p) (3 s - 1 + p)
  \left( s^2(15 p -18)+( 6 p^2 -3 p  -6) s - p^3  + 3p^2-2p\right)
\ge 0.
\]
This is true  if and only if
\[
f_1:=(15 p -18)s^2+( 6 p^2 -3 p  -6) s - p^3  + 3p^2-2p \ge 0
\quad \mbox{for} \  p\ge 2, s\ge \max \{2, (p-2)/3 \}.
\]
Since the discriminant
\[
 D_1=96p^4-288p^3+273p^2-108p+36
\]
is  positive for $p\ge 2$, $f_1 (s) \ge 0$ if and only if
\[
s\ge \frac{-( 6 p^2 -3p  -6)+\sqrt{D_1}}{6(5 p -6)} =:s_1.
\]

In terms of the number of variables $n$, this means that the
bound given in Theorem~\ref{te4} is better than that given in
Theorem~\ref{te3} for
\[
 n \ge n_1:=\frac{\sqrt{D_1}- 6 p^2 +13 p -6}{2(5 p -6)}.
\]

\medskip

\textbf{Case $ n=3s+2$,  $s\ge 0$.}
Now
\begin{align*}
u_2 & :=\frac{27s^3+27(p+3)s^2+(9p^2+48p+60)s+p^3+3p^2+%
14p+12}{27s^2+3(5p+9)s+6p+6},
\\
l_2 & :=\frac{27s^2+9(2p+3)s+3p^2+18p+6-\sqrt{dv2}}{6p},
\\
dv_2 & :=729s^4+(648p+1458)s^3+(162p^2+972p+1053)s^2
\phantom{The last formula}
\\
 & \phantom{The last formula i}
{}+(162p^2+468p+324)s-3p^4+48p^2+72p+36.
\end{align*}

Since $dv_2$ increases with $s$, its minimal value
in the range of interest is
\[
dv_2(\frac{p-3}{3})=3(p-2)(16p^3-40p^2+27p-6)\ge 0 .
\]
Therefore, $l_2\le u_2$ is equivalent to
\[
\left( 9s^2+(5p+9)s+2p+2\right) \sqrt{dv_2}\ge r_2,
\]
with
\[
r_2:=243s^4+243(p+2)s^3+(63p^2+351p+351)s^2
\phantom{The last formula}
\]
\[
\phantom{The ist foru}
{}+(-3p^3+57p^2+162p+108)s-2p^4+14p^2+24p+12.
\]

We further  find
\[
 r_2'\ge r_2'\left(\frac{p-3}{3}\right)=
3(52p^3-161p^2+141p-36) > 0  \quad  \mbox{for}\  p\ge 2,
\]
so that
\[
r_2\ge r_2 \left(\frac{p-3}{3}\right)=
2(8p^4-40p^3+61p^2-33p+6) > 0  \quad  \mbox{for}\ p\ge 3.
\]
As for $p=2$ the right side of the desired inequality
$l_2\le u_2$  is $3(3s+2)(27s^3+90s^2+85s+14)>0$, we may
square both sides of the  inequality under study and find
that for $p\ge 2$ it is  equivalent to
\[
p^3(p+2+3s)(p+1+3s)(p+3s)\left( 15s^2+(6p+15)s-p^2+3p+4\right)
\ge 0.
\]
This holds  precisely when
\[
f_2:=15s^2+(6p+ 15)s-p^2+3p+4 \ge 0.
\]
The discriminant being $96p^2-15>0$,
$f_2$ takes positive values for
\[
s\ge \frac{-3(2p+5)+\sqrt{96p^2-15}}{30}=:s_2.
\]

Thus we conclude that $l_2\le u_2$ holds for $n\equiv 2 \pmod 3$
and
\[
n\ge n_2:= \frac{\sqrt{96p^2-15}-6p+5}{10}.
\]

\medskip

\textbf{Case $ n=3s$,  $s\ge 1$.}
We study the inequality $l_3\le u_3$, with
\begin{align*}
l_3 & :=\frac{27s^2+9(2p-1)s+3p^2+6p-\sqrt{dv_3}}{6p},
\\
u_3 & :=\frac{27s^3+27(p+1)s^2+(9p^2+12p-12)s+p^3-3p^2-
4p}{27s^2+3(5p-3)s-3p},
\end{align*}
and
\[
dv_3:= 729s^4+(648p-486)s^3+(162p^2-324p+81)s^2+
(-54p^2+36p)s+12p^2-3p^4.
\]
With arguments similar to those given in the previous cases
one finds
\[
dv_3\ge dv_3(\frac{p-1}{3})=3(p-2)(16p^3-40p^2+27p-6) \ge 0
\quad \mbox{for} \  p\ge 2.
\]
Therefore, $l_3\le u_3$ is equivalent to
\[
 \left( 9s^2+(5p-3)s-p \right) \sqrt{dv_3} \ge r_3,
\]
where
\[
r_3:=243s^4+(243p-162)s^3+(63p^2-126p+27)s^2
+(-3p^3-21p^2+15p)s+2p^2+3p^3-2p^4.
\]
After we  check that $r_3$ is positive in the range $p\ge 2$,
$s\ge \max \{ 1, (p-1)/3 \}$, we may square the last inequality
and find that it is equivalent to
\[4p^2(3s+p-2)(3s+p)(3s+p-1)
\left( s^2(15p-9)+(6p^2-9p+3)s-p^3+p\right) \ge 0.
\]
Since the quadratic polynomial in $s$
\[f_3=(15p-9)s^2+(6p^2-9p+3)s-p^3+p
\]
has discriminant
\[
D_3:=3(p-1)(32p^3-16p^2+3p-3) >0,
\]
we have $f_3 (s) \ge 0$ if and only if
\[
s\ge \frac{-6p^2+9p-3+\sqrt{D_3}}{6(5p-3)}=:s_3.
\]

The conclusion is that, for $n\equiv 3 \pmod 3$, the bound
provided in Theorem~\ref{te4} is tighter than that given
in Theorem~\ref{te3} if and only if
\[
n\ge \frac{-6p^2+9p-3+\sqrt{D_3}}{2(5p-3)}=:n_3.
\]

\medskip

It remains to compare  $n_1$, $n_2$, $n_3$ and $p-1$.

\begin{Lemma}
One has  $n_1=1$, $n_2\simeq 1.22$, $n_3\simeq 1.07$
for $p=2$, and  $p-1\ge n_2 > n_3 > n_1$ for $p\ge 3$.
\end{Lemma}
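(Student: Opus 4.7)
My plan is to handle the case $p=2$ by direct evaluation and then, for $p\ge 3$, to reduce each of the three inequalities $p-1\ge n_2$, $n_2>n_3$, $n_3>n_1$ to polynomial inequalities that can be verified on $[3,\infty)$.

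For $p=2$, substituting into the defining formulas yields the discriminants $D_1 = 144$, $96p^2-15 = 369$ and $D_3 = 585$, from which the stated numerical values follow by straightforward arithmetic.

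For $p\ge 3$, I would first dispatch $p-1\ge n_2$: clearing the denominator leaves $16p-15\ge \sqrt{96p^2-15}$, and since $16p-15>0$ for $p\ge 3$ one may square to obtain $160p^2-480p+240\ge 0$, a quadratic whose larger root lies below $3$. The inequalities $n_2>n_3$ and $n_3>n_1$ share the structure of comparing two expressions of the form $(\alpha(p)\pm\sqrt{\beta(p)})/\gamma(p)$. For each, I would rearrange so that both sides are polynomials in $p$ plus a single square root, check the sign of the polynomial side in order to justify squaring, and then derive a polynomial inequality in $p$. Positivity of this resulting polynomial on $[3,\infty)$ I would establish by evaluating at $p=3$ and controlling the derivative, iterating the derivative trick on the second derivative if a single monotonicity check does not suffice.

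The main obstacle I anticipate is bookkeeping. All three quantities $n_1$, $n_2$, $n_3$ have the same leading asymptotic behaviour $\sim (2\sqrt{6}-3)p/5$, so the differences whose positivity must be shown are of order $O(1)$, and the squaring steps cannot be allowed to lose information at leading order. If the direct algebra becomes unwieldy, an alternative I would fall back on is to expand each $n_i$ in a Taylor series in $1/p$, establish explicit error bounds $|n_i - Lp - c_i|<\delta_i$ with $c_1<c_3<c_2<L'$ for all $p$ beyond some threshold $p_0$, and then verify the four inequalities numerically for the finitely many values $p\in\{3,4,\dots,p_0-1\}$ that remain.
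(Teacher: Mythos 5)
Your overall strategy is exactly the paper's: treat $p=2$ by direct evaluation, and for $p\ge 3$ isolate one radical at a time, check the sign of the polynomial side, square, and verify positivity of the resulting polynomial. The parts you actually execute are correct: the discriminant values $D_1=144$, $96p^2-15=369$, $D_3=585$ at $p=2$ check out, and your reduction of $p-1\ge n_2$ to $16p-15\ge\sqrt{96p^2-15}$ and thence to $160p^2-480p+240\ge 0$ (larger root $(3+\sqrt{3})/2<3$) is a clean, complete proof of that inequality — arguably cleaner than the paper's, which settles for the cruder bounds $\sqrt{96p^2-15}<10p\le 16p-15$ for $p\ge 3$ (and contains a typo, ``$16p-5p$'', at that point).

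The gap is that the two inequalities carrying the real content of the lemma, $n_2>n_3$ and $n_3>n_1$, are only announced as a plan (``I would rearrange\dots, check the sign\dots, derive a polynomial inequality'') and never carried out. Your worry about leading-order cancellation is legitimate — all three quantities grow like $(2\sqrt 6-3)p/5$, so $n_2-n_3$ and $n_3-n_1$ are $O(1)$ — but the plan does terminate without needing your Taylor-expansion fallback, and you should verify this. For $n_2>n_3$ the paper's chain is: clear denominators to get $(5p-3)\sqrt{96p^2-15}>2p+5\sqrt{D_3}$, square once (both sides positive) and simplify to $36p^3-47p^2+45p-18>p\sqrt{D_3}$, check the left side is positive, square again, and land on $(5p-3)^2(12p^4-18p^3+28p^2-15p+9)>0$, which is manifest. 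For $n_1<n_3$ the analogous two squarings (via $2p^2+(5p-3)\sqrt{D_1}<(5p-6)\sqrt{D_3}$ and then $(5p-3)p\sqrt{D_1}<(5p-3)(36p^3-119p^2+150p-72)$) lead to
\[
1200p^6-8424p^5+24904p^4-40866p^3+39627p^2-21600p+5184>0,
\]
which one checks for $p\ge 3$ by your evaluate-at-$3$-and-control-the-derivative device. Until you produce these (or equivalent) explicit reductions, including the intermediate sign checks that license each squaring, the proof is not complete.
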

\begin{proof}
Assume $p\ge 3$. The inequality $n_2>n_3$ is successively
equivalent to
\[
(5p-3)\sqrt{96p^2-15}>2p+5\sqrt{D_3} ,
\]
\[
36p^3-47p^2+45p-18 >p \sqrt{D_3},
\]
and
\[
(5p-3)^2(12p^4-18p^3+28p^2-15p+9)>0,
\]
which is obviously true.

The inequality $n_1<n_3$ is rewritten
\[2p^2+(5p-3) \sqrt{D_1} < (5p-6)\sqrt{D_3}.
\]
Squaring this, one finds after some easy computations
\[
 (5p-3)p \sqrt{D_1} < (5p-3)(36p^3-119p^2+150p-72).
\]
After simplification and squaring one gets
\[
 1200p^6-8424p^5+24904p^4-40866p^3+39627p^2-21600p+5184>0,
\]
which is readily checked to be true for $p\ge 3$.

Finally, $n_2<p-1$ is put into the equivalent form
\[
 \sqrt{96p^2-15}<16p-5p,
\]
which holds because the left side is less than $10p$, while
the right side is at least $11p$ for $p\ge 3$.
\end{proof}

Now the proof of Theorem~\ref{te5} is complete.

\medskip

The bounds for the class of ideals studied in Section~\ref{sec2}
can be compared by analogue reasoning. The details of the
analysis are, however, much more involved. As seen by
Example~\ref{ex25}, none of Theorems~\ref{te1} and~\ref{te2}
is uniformly better  than the other. Our final
result specifies conditions under which Theorem~\ref{te2}
yields a tighter bound than that given in Theorem~\ref{te1}.

\begin{Theorem} \label{te6}
Let $I=(x_1,\ldots,x_t)\bigcap(x_{t+1},\ldots,x_n)$ be a
monomial ideal in \linebreak
$S =K[x_1,\ldots,x_n]$, where $1\leq t< n$,
and let $I'=(I,x_{n+1},\ldots,x_{n+p}) \subset S'=S[x_{n+1},
\ldots, x_{n+p}]$, where $p\geq 2$.  Suppose that it holds
\[
  n\ge n_0:=t-\frac{p}{2}+p\sqrt{1+\frac{p^2-4}{3t(t+p)}}.
\]

Then the bound for $\sdepth(I')$ given in~\eqref{ect2} is
tighter than that given in~\eqref{ect1} if and only if
\[
 0\le 3n^2+6np-4p^2+4 \quad \text{and} \quad
\max \{ 1, t_l \} \le t \le \min \{ n-1, t_u\},
\]
where
\[
 t_l:=\frac{6n-\sqrt{6(3n^2+6np-4p^2+4)}}{12},
\quad
 t_u:=\frac{6n+\sqrt{6(3n^2+6np-4p^2+4)}}{12}.
\]
\end{Theorem}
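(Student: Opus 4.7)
The plan is to derive the stated equivalence by the same technique used in the proof of Theorem~\ref{te5}: rewrite $l\le u$, where $l$ and $u$ denote the right-hand sides of~\eqref{ect2} and~\eqref{ect1}, so that $\sqrt{D}$ is isolated, then square and factor. A useful preliminary simplification is the combinatorial identity
\[
\binom{n}{3}-\binom{t}{3}-\binom{n-t}{3}
=\binom{t}{2}(n-t)+t\binom{n-t}{2}=\frac{t(n-t)(n-2)}{2},
\]
obtained by counting $3$-subsets of $\{1,\dots,n\}$ that meet both $\{1,\dots,t\}$ and $\{t+1,\dots,n\}$. This turns $u-2$ into $N/d$ with $d=t(n-t)+np-\tfrac{p(n+2)}{4}$ (positive by the proof of Theorem~\ref{te1}) and $N$ a polynomial in $n,t,p$. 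The inequality $l\le u$ then becomes
\[
\sqrt{D}\ge 6p\Bigl(n+\tfrac{p}{2}+\tfrac{t(n-t)}{p}-1\Bigr)-\frac{6pN}{d}=:R(n,t,p).
\]

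Using the a priori bound $k\le p+1+n/2$ recalled in the proof of Theorem~\ref{te1}, I would verify the sign of $R$ on the relevant parameter region; when $R<0$ the inequality $l\le u$ is automatic, and when $R\ge 0$ squaring produces the equivalent polynomial condition $d^{2}D\ge(dR)^{2}$. Clearing denominators (all positive by hypothesis), the question reduces to the sign of the polynomial $P(n,t,p):=d^{2}D-(dR)^{2}$. Based on the pattern of the analogous arguments in Theorems~\ref{te2} and~\ref{te5}, I expect that with symbolic computation $P$ factors as an unambiguously positive factor times $-Q(t)$, where
\[
Q(t):=24t^{2}-24nt+\bigl(3n^{2}-6np+4p^{2}-4\bigr).
\]
A direct Vieta calculation shows that $Q$ has discriminant $96\bigl(3n^{2}+6np-4p^{2}+4\bigr)$ and roots $t_{l}$, $t_{u}$ exactly as stated.

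Therefore $P\ge 0$ is equivalent to $Q(t)\le 0$, which is in turn equivalent to the simultaneous conditions $3n^{2}+6np-4p^{2}+4\ge 0$ (for real roots to exist) and $t_{l}\le t\le t_{u}$ (for $t$ to lie between them). Intersecting with the feasible range $1\le t\le n-1$ from the hypothesis of Theorem~\ref{te1} yields the claim. The main obstacle is the identification of the factorization of $P$: the polynomial $d^{2}D-(dR)^{2}$ has degree up to $6$ in $t$, so producing the quadratic factor $Q(t)$ cleanly will require computer algebra, as in the proof of Theorem~\ref{te5}. A secondary subtlety is to reconcile the $R<0$ branch with the stated $t$-range description, which should follow by showing that $R<0$ already implies $Q(t)<0$ in that regime.
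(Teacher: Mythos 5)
Your overall strategy---isolate $\sqrt{D}$, square, and factor the resulting polynomial with computer algebra---is exactly the paper's, and your identification of the quadratic $Q(t)=24t^2-24nt+3n^2-6np+4p^2-4$ (the paper's $h_2$) with roots $t_l,t_u$ is correct; your closed form for $\binom{n}{3}-\binom{t}{3}-\binom{n-t}{3}$ is also correct and a tidy simplification the paper does not state. However, the two steps you defer or assert are precisely where the real work lies, and as written they are gaps. First, the sign of $R$: since the theorem is an ``if and only if,'' squaring must be an \emph{equivalence}, so you must eliminate the branch $R<0$ (on which $l\le u$ holds regardless of whether $t$ lies in $[t_l,t_u]$). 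The paper does this by computing the discriminant of the numerator $f_4$ of $R$, locating its larger root $n_u$, and proving in Lemma~\ref{le3} that $n_u\le n_0$, so the hypothesis $n\ge n_0$ forces $f_4\ge 0$; this is a nontrivial polynomial inequality in $n,t,p$, verified case by case in $t$. Your proposed fallback (``show $R<0$ already implies $Q(t)<0$'') is unproved and not obviously easier; without one of these arguments the ``only if'' direction fails. Second, the factorization is not ``an unambiguously positive factor times $-Q(t)$'': the paper finds $g_4^2D-f_4^2=4p^3\,h_1h_2$, where $h_1=3(n-2)t^2-(3n^2-6n)t+3n^2p+3np^2-6np+2p-3p^2+p^3$ is a second quadratic in $t$ with \emph{positive} leading coefficient, so its negativity on the admissible range $1\le t\le n-1$ is exactly what must be shown (Lemma~\ref{le4}: its roots satisfy $t_1<1$ and $t_2>n-1$). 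Both missing pieces are genuine computations rather than bookkeeping; a complete write-up must also dispose of the small case $n=2$ separately, as the paper does before assuming $n\ge 3$.
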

\begin{proof}
For $n=2$ one has $t=1$ and therefore (by hypothesis
$n\ge n_0$) $p=2$, so that the
bounds given in Theorems~\ref{te1} and~\ref{te2} are
$10/3$ and $3$, respectively. From now on we shall assume
$n\ge 3$.

With notation
\begin{align*}
 L & :=n+\frac{p}{2}+\frac{t(n-t)}{p}+1-
\frac{\sqrt{D}}{6p},
\\
 U &  :=2+\frac{\left(\!\!
                          \begin{array}{c}
                            n \\
                            3 \\
                          \end{array}
                        \!\!\right)
                        -
                        \left(\!\!
                          \begin{array}{c}
                            t \\
                            3 \\
                          \end{array}
                        \!\!\right)
                        -
                        \left(\!\!
                          \begin{array}{c}
                            n-t \\
                            3 \\
                          \end{array}
                        \!\!\right)
                        +p\left(\!\!
                           \begin{array}{c}
                             n \\
                             2 \\
                           \end{array}
                         \!\!\right)+n\left(\!\!
                                   \begin{array}{c}
                                     p \\
                                     2 \\
                                   \end{array}
                                \!\! \right)+\left(\!\!
                                           \begin{array}{c}
                                             p \\
                                             3 \\
                                           \end{array}
                                        \!\! \right)
}{t(n-t)+np-\frac{p(n+2)}{4}
} ,
\end{align*}
we have to find when does it hold $L\le U$. Routine computations
bring this inequality to the equivalent form
\beq{ec41}
f_4\le g_4 \sqrt{D},
\eeq
with
\begin{align*}
f_4 & :=(6 p^2+30 t p+24 t^2)n^2+(-3 p^3+12 p^2 t-6 p^2-12 t p-
30 t^2 p-48 t^3)n  \\ 
& \phantom{find when doe does itdoes i}
{}-4 p^4+6 p^3+4 p^2-12 p^2 t^2+12 t^2 p+24 t^4,
\\
g_4 & :=4nt+3np-4t^2-2p,
\\
 D & :=(36 p t+36 t^2)n^2-36( t^2 p- t p^2+2 t^3)n
+12 p^2-36 p^2 t^2-3 p^4+36 t^4.
\end{align*}
The discriminant of $f_4$, which is found to be
\[
 df_4:=3p^2\left( 35p^4+(136t-36)p^3+(332t^2-264t-20)p^2
\right. \phantom{find when does it}
\]
\[
\left. \phantom{fnd whe}{}+
(336t^3-264t^2-112t)p+108t^4-48t^3-80t^2\right),
\]
is  positive in our hypothesis (for $t\ge 2$ the coefficients
of powers of $p$ are obviously positive, and a direct verification
leads to the same conclusion if $t=1$). Therefore, $f_4$ takes
nonnegative values for either
\[
n\le n_l:= \frac{3 p^3-12 p^2 t+6 p^2+12 t p+
30 t^2 p+48 t^3-\sqrt{df_4}}{2(6 p^2+30 t p+24 t^2)}
\]
or
\[
n\ge n_u:= \frac{3 p^3-12 p^2 t+6 p^2+12 t p+
30 t^2 p+48 t^3+\sqrt{df_4}}{2(6 p^2+30 t p+24 t^2)}.
\]
As will shall prove in Lemma~\ref{le3} below, one has
$n_u\le n_0$. Therefore, the hypothesis of Theorem~\ref{te6}
ensures that Eq.~\eqref{ec41} is equivalent to
\[
h:= g_4^2 D-f_4^2\ge 0..
\]
With some computer assistance, we find
\[
 h=4p^3 h_1h_2,
\]
where the quadratic polynomials in $t$
\begin{align*}
h_1 & :=3(n-2)t^2-(3n^2-6n)t+3n^2p+3np^2-6np+2p-3p^2+p^3, \\
h_2 & :=24t^2-24nt+3n^2-6np+4p^2-4
\end{align*}
have discriminant
\[
\Delta _1:=3(n-2)(3n^3-6n^2+12n^2p+12np^2-24np+4p^3-12p^2+8p)
\]
and respectively
\[
 \Delta _2:=288n^2+576np-384p^2+384.
\]
Each discriminant is increasing with $n$, so that
\[
 \Delta _1 \ge \Delta _1 (3)=3(4p^3+24p^2+44p+27) > 0.
\]
Hence, $h_1$ always has the real roots
\[
t_1:=\frac{3n^2-6n-\sqrt{\Delta_1}}{6(n-2)},
\quad
t_2:=\frac{3n^2-6n+\sqrt{\Delta_1}}{6(n-2)},
\]
while $h_2$  has the real roots
\[
 t_3:=\frac{6n-\sqrt{6(3n^2+6np-4p^2+4)}}{12},
\quad
 t_4:=\frac{6n+\sqrt{6(3n^2+6np-4p^2+4)}}{12}
\]
provided that
\[
 0\le 3n^2+6np-4p^2+4.
\]

In Lemma~\ref{le4} below we show that $t_1<1$ and  $n-1<t_2$.
Therefore, $h_1$ is negative for all admissible values of $t$,
whence $h\ge 0$ is equivalent to  $h_2 \le 0$. The latter
inequality is valid precisely when the conclusion of
Theorem~\ref{te6} holds.
\end{proof}

The proof of Theorem~\ref{te6} is complete as soon as we prove
the next lemmas.
\begin{Lemma} \label{le3}
One has $n_u\le n_0$.
\end{Lemma}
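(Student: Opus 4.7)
The strategy is to reduce $n_u\le n_0$ to the single verification $f_4(n_0)\ge 0$. Since $f_4$ is a quadratic in $n$ with positive leading coefficient $6(p+t)(p+4t)$, having $f_4(n_0)\ge 0$ forces $n_0$ outside the open interval $(n_l,n_u)$. A direct estimate then shows $n_0$ lies to the right of the vertex of $f_4$: because $p\ge 2$ the radical in the definition of $n_0$ is at least $1$, so $n_0\ge t+p/2$, whereas the vertex equals
\[
\frac{3p^3-12p^2t+6p^2+12tp+30t^2p+48t^3}{12(p+t)(p+4t)},
\]
which is at most $t+p/2$ by the routine polynomial identity
\[
6(p+t)(p+4t)(2t+p)-\bigl(3p^3-12p^2t+6p^2+12tp+30t^2p+48t^3\bigr)=3p\bigl(p^2-2p+18pt-4t+18t^2\bigr),
\]
a manifestly nonnegative quantity for $p\ge 2$, $t\ge 1$. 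Hence $n_0\ge n_u$ will follow from $f_4(n_0)\ge 0$.

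The crux is therefore the computation of $f_4(n_0)$. A key observation is that the $n^2$-coefficients of $f_4$ and $D$ factor respectively as $6(p+t)(p+4t)$ and $36t(p+t)$, their ratio being the clean quantity $(p+4t)/(6t)$. Consequently one may write
\[
f_4(n)=\frac{p+4t}{6t}\,D(n)+R(n),
\]
with $R$ a first-degree polynomial in $n$. Since $D(n_0)=0$ by construction, this gives $f_4(n_0)=R(n_0)$. Expanding $R(n)=\alpha(t,p)\,n+\beta(t,p)$ yields explicit polynomial coefficients; substituting the closed form $n_0=t-p/2+p\sqrt{1+(p^2-4)/(3t(t+p))}$ then reduces $R(n_0)\ge 0$ to an inequality of the shape
\[
P(t,p)+Q(t,p)\,\sqrt{1+\frac{p^2-4}{3t(t+p)}}\ge 0
\]
for explicit polynomials $P,Q$ in $t$ and $p$.

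The main obstacle will be to verify this last inequality. According to the signs of $P$ and $Q$ one either concludes directly or isolates the radical and squares to obtain a polynomial inequality in $t$ and $p$ under the assumptions $p\ge 2$, $t\ge 1$. I anticipate that after clearing denominators it factors as $p^{a}t^{b}(p+t)^{c}\,\Phi(t,p)$ where the first factor is manifestly positive and $\Phi$ is a relatively low-degree polynomial; establishing $\Phi\ge 0$ will most likely require splitting off the boundary case $t=1$ from the generic case $t\ge 2$, and within each case completing the square in $p$ or otherwise exploiting $p\ge 2$, in the same spirit as the polynomial verifications already carried out in the proofs of Theorem~\ref{te6} and Lemma~\ref{le1}.
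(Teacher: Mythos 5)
Your reduction is correct and takes a genuinely different route from the paper. The paper proves $n_u\le n_0$ head-on: it substitutes the closed form of $n_0$ into $n_u\le n_0$, isolates $\sqrt{df_4}$, squares, and then verifies the resulting degree-seven polynomial inequality in $p$ separately for $t=1$, $t=2$ and $t\ge 3$. You instead exploit the structure of the problem: $n_u$ is the larger root of the quadratic $f_4$, so it suffices to show $f_4(n_0)\ge 0$ together with the fact that $n_0$ lies to the right of the vertex of $f_4$; and since $n_0$ is by construction the larger root of $D$ (viewed as a quadratic in $n$), writing $f_4=\frac{p+4t}{6t}D+R$ with $R$ affine in $n$ collapses the evaluation to $f_4(n_0)=R(n_0)$. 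I checked your vertex estimate: the identity $6(p+t)(p+4t)(2t+p)-(3p^3-12p^2t+6p^2+12tp+30t^2p+48t^3)=3p(p^2-2p+18pt-4t+18t^2)$ is correct and the right side is indeed nonnegative for $p\ge 2$, $t\ge 1$, while $n_0\ge t+p/2$ follows from the radical being $\ge 1$. This is an attractive simplification: it replaces the paper's squaring of an inequality involving the large discriminant $df_4$ by an inequality involving only a linear remainder. (One small inaccuracy: $\beta(t,p)$ is not a polynomial — the constant term of $\frac{p+4t}{6t}D$ carries a denominator $2t$ — but clearing that denominator is harmless.)

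The genuine gap is that the decisive step is never carried out. After the reduction, everything hinges on $R(n_0)\ge 0$, i.e.\ on $P(t,p)+Q(t,p)\sqrt{1+\frac{p^2-4}{3t(t+p)}}\ge 0$, and you only \emph{anticipate} that this will factor nicely and be checkable; you do not compute $P$ and $Q$, do not settle their signs, and do not verify the squared inequality. Note that the sign issue is real: $Q=p\alpha$ with $\alpha=-3p(3p^2+2pt+2p+4t-2t^2)$, which is negative for small $t$ and positive for large $t$, so the case split you allude to cannot be avoided. Since the entire content of this lemma \emph{is} such a computational verification (the paper devotes its whole proof to writing out and checking the explicit polynomial), a proof that stops at "I anticipate that it factors as $p^at^b(p+t)^c\Phi(t,p)$ with $\Phi\ge 0$" has not established the statement. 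The framework is sound and would very likely succeed, but as written the lemma remains unproved.
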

\begin{proof}
The desired inequality is successively equivalent to
\[
(60tp+12p^2+48t^2)\sqrt{1+\frac{p^2-4}{3t(t+p)}}+
6t^2-6tp-9p^2-12t-6p\ge \sqrt{df_4}
\]
and
\[
 16p^7+116p^6+(776t^2-128)p^5+4t(763t^2+66t-256)p^4
+(6837t^4+876t^3-5068t^2+256)p^3
\]
\[
{}+t(8517t^4+1884t^3-11708t^2-1056t+2240)p^2
+8t^2(711t^4+225t^3-1578t^2-204t+712)p
\]
\[
\phantom{the last inequality becom}
{}+4t^3(135t^2+48t-244)(3t^2-4)\ge 0.
\]

For $t=1$, the last inequality becomes
\[
(p+2)(16p^6+84p^5+480p^4+1332p^3+237p^2-597p+122)\ge 0,
\]
for $t=2$
\[
8(p+4)(2p^3+9p^2+210p+392)(p+2)^3 \ge 0,
\]
while for $t\ge 3$ all powers of $p$ have positive coefficients.
\end{proof}

\begin{Lemma} \label{le4}
One has $t_1< 1$, $ n-1 < t_2 $. 
\end{Lemma}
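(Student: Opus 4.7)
The plan is to leverage a symmetry between the two claimed inequalities in order to reduce both to a single polynomial inequality in $n$ and $p$. By Vieta's formulas applied to $h_1$, the sum of the roots equals
\[
 t_1+t_2=\frac{3n^2-6n}{3(n-2)}=n.
\]
Hence the conditions $t_1<1$ and $t_2>n-1$ are equivalent: if $t_1<1$ then $t_2=n-t_1>n-1$ and conversely. Thus it suffices to prove, say, $t_1<1$.

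Under the assumption $n\ge 3$ carried over from the proof of Theorem~\ref{te6}, the denominator $6(n-2)$ is positive, so $t_1<1$ rearranges to $\sqrt{\Delta_1}>3(n-2)^2$. Both sides being nonnegative, squaring and dividing out the positive factor $3(n-2)$ reduces the problem to checking
\[
 3n^3-6n^2+12n^2p+12np^2-24np+4p^3-12p^2+8p > 3(n-2)^3
\]
for $n\ge 3$, $p\ge 2$.

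Next, expanding $3(n-2)^3=3n^3-18n^2+36n-24$ and regrouping the difference of the two sides produces
\[
 12n^2(1+p)+12np(p-2)+4p(p-1)(p-2)-36n+24.
\]
For $p\ge 2$ the middle two summands are nonnegative, while $12n^2(1+p)\ge 36n^2$; hence the whole expression is bounded below by $36n^2-36n+24=12(3n^2-3n+2)$, which is manifestly positive. This establishes $\sqrt{\Delta_1}>3(n-2)^2$, and the two inequalities of the lemma follow.

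The main obstacle is purely bookkeeping: one must correctly expand $\Delta_1$, square the inequality, and regroup the resulting polynomial so that nonnegativity of each summand becomes apparent under the standing hypotheses $n\ge 3$, $p\ge 2$. Beyond the initial reduction via Vieta, no further structural insight seems required.
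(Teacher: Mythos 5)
Your proof is correct and follows essentially the same route as the paper: both reduce $t_2>n-1$ to $t_1<1$ via $t_1+t_2=n$, and both verify $t_1<1$ by clearing the denominator $6(n-2)>0$ and checking positivity of the resulting polynomial (your regrouped sum of nonnegative terms is exactly the paper's factored form $4(p+1)\bigl(3n^2+3(p-3)n+p^2-4p+6\bigr)$). No issues.
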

\begin{proof}
The inequality $t_1< 1$ is readily brought to the equivalent
form
\[
0< (p+1) \bigl( 3n^2+3(p-3)n+p^2-4p+6\bigr),
\]
which is obviously true for $n\ge \max \{ 2, p-1\}$. Since
$t_1+t_2=n$, we also have $ n-1 < t_2 $.
\end{proof}

\end{document}